\def\R{\mathbb{R}}
\def\C{\mathbb{C}}
\newcommand{\expn}{\operatorname{e}}
\newcommand{\pro}{\operatorname{Pr}}
\newcommand{\diag}{\operatorname{diag}}
\newcommand{\orth}{\operatorname{orth}}
\newcommand{\vect}{\operatorname{vec}}
\newcommand{\beq}{\begin{equation}}
\newcommand{\eeq}{\end{equation}}
\newcommand {\mat}      [1] {\left[\begin{array}{#1}}
\newcommand {\rix}          {\end{array}\right]}
\newcommand {\smat}      [1] {\left[\begin{smallmatrix}{#1}}
\newcommand {\srix}          {\end{smallmatrix}\right]}
\newcommand {\s}      [1] {\begin{smallmatrix}{#1}}
\newcommand {\se}          {\end{smallmatrix}}
\newcommand{\trace}{\operatorname{tr}}
\newcommand{\hA}{\ensuremath{\hat{A}}}
\newcommand{\hB}{\ensuremath{\hat{B}}}
\newcommand{\hC}{\ensuremath{\hat{C}}}
\newcommand{\tB}{\ensuremath{\tilde{B}}}
\newcommand{\tC}{\ensuremath{\tilde{C}}}
\newcommand{\cE}{\ensuremath{\mathcal{E}}}
\newtheorem{defn}{Definition}[section]
\newtheorem*{remark}{Remark}
\newtheorem{lem}[defn]{Lemma}
\newtheorem{prop}[defn]{Proposition} 
\newtheorem{thm}[defn]{Theorem}
\newcommand{%
	\tikzsetnextfilename{figure/}%
	\input{figure/.tikz}%
}[1]{%
	\tikzsetnextfilename{figure/#1}%
	\input{figure/#1.tikz}%
}
\def\addlegendimage{\csname pgfplots@addlegendimage\endcsname}
\newlength\fheight
\newlength\fwidth
  \newcommand{\matlab}{MATLAB\textsuperscript{\textregistered}}
  \newcommand{\intel}{Intel\textsuperscript{\textregistered}}
    \newcommand{\xeon}{Xeon\textsuperscript{\textregistered}}
\title{Towards Time-Limited $\mathcal H_2$-Optimal Model Order Reduction}
\author{Pawan Goyal\thanks{Max Planck Institute for Dynamics of Complex Technical Systems, Sandtorstr. 1, 39106 Magdeburg, Germany, Email: {\tt 
			goyalp@mpi-magdeburg.mpg.de}.}\quad 
				Martin Redmann\thanks{Corresponding author. Weierstrass Institute for Applied Analysis and Stochastics, Mohrenstr. 39, 10117 Berlin, Germany, Email: {\tt 
martin.redmann@wias-berlin.de}.  Financial support by the DFG via Research Unit FOR 2402 is gratefully acknowledged.}}
\begin{document}

\maketitle

\begin{abstract}
In order to solve partial differential equations numerically and accurately,  a high order spatial discretization is usually needed. Model order 
reduction (MOR) techniques are often used to reduce the order of spatially-discretized systems and hence reduce computational 
complexity. A particular class of MOR techniques are $\mathcal H_2$-optimal methods such as the \emph{iterative rational Krylov subspace algorithm} 
(IRKA) and related schemes. However, these methods are used to obtain good approximations on a infinite time-horizon. Thus, in this work, our main 
goal is to discuss MOR schemes for time-limited linear systems. For this, we propose an alternative time-limited $\mathcal H_2$-norm
and show its connection with the time-limited Gramians.  We then provide first-order optimality conditions for an optimal reduced order model (ROM) 
with respect to the time-limited  $\mathcal H_2$-norm. Based on these optimality conditions, we propose an iterative scheme, which, upon convergence, 
aims at satisfying these conditions approximately. Then, we analyze how far away the obtained ROM due to the proposed algorithm is from satisfying the 
optimality conditions. We test the efficiency of the proposed iterative scheme using various numerical examples and illustrate that the newly proposed 
iterative method can lead to a better reduced-order compared to the unrestricted IRKA in the finite time interval of interest. 
\end{abstract}
\textbf{Keywords:} Model order reduction, linear systems, $\mathcal H_2$-optimality, Gramians, Slyvester equations. 

\noindent\textbf{MSC classification:}  15A16, 15A24,  93A15, 93C05.


\section{Introduction}

We consider a continuous linear time-invariant (LTI) system as follows:
\begin{equation}\label{sys:original}
\Sigma:\left\{ \begin{aligned}
\dot x(t) &= Ax(t) +  Bu(t),\quad x(0) = 0,\\
y(t) &= Cx(t),\quad t\geq 0,
\end{aligned}\right.
\end{equation}
where $A \in \R^{n\times n}$, $B \in \R^{n\times m}$, and $C \in \R^{p\times n}$. Generally, $x(t) \in \R^n$, $u(t)\in \R^m$ and $y(t) \in \R^p$ 
denote the state, control input and the quantity of interest (output vector) at time $t$, respectively, and in the most cases, the dimension of the 
state vector is much larger than the number of control inputs and outputs, i.e., $n \gg m,p$. We also assume that the matrix $A$ is Hurwitz, meaning 
that $\Lambda(A) \subset \C_{-}$, where $\Lambda(\cdot)$ denotes the spectrum of a matrix.  Due to the large dimension of system 
(\ref{sys:original}), 
it is numerically very expensive to simulate the system for various control inputs and perform engineering studies such as optimal control and 
optimization. One approach to 
overcome such an issue is \emph{model order reduction} (MOR), where we aim at constructing a reduced-order system as follows:
\begin{equation}\label{sys:reduced}
\hat\Sigma:\left\{ \begin{aligned}
\dot {\hat x}(t) &= \hat A \hat x(t) +  \hat Bu(t), \quad \hat x(0) = 0\\
\hat y(t) &= \hat C\hat x(t),\quad t\geq 0,
\end{aligned}\right.
\end{equation}
where  $\hat A\in \R^{r\times r}$, $\hat B \in \R^{r\times m}$, and $\hat C \in \R^{p\times r}$ and $r \ll n$ such that $y \approx \hat y$ in an 
appropriate norm for all admissible control inputs $u$. In the literature, there is a huge collection of methods available which allow us to 
construct such reduced-order systems, e.g., see \cite{morAnt05,morBenMS05,morSchVR08}. 

Most of the methods for linear systems such as balanced truncation, e.g., see \cite{morAnt05, morMoo81} and the iterative rational Krylov subspace 
algorithm \cite{morGugAB08} aim at constructing a reduced-order system which is good for an infinite time horizon. In other words, the output 
of system (\ref{sys:original}) is very well approximated by the output of (\ref{sys:reduced}) on the time interval $[0,\infty)$. 
However, there are several practical applications, as for example, a 
finite-time optimal control problem, where one is interested in approximating the output $y$ on a finite time interval, e.g., $[0,\bar T]$, meaning 
that \begin{equation}\label{eq:time_constrain}
y \approx \hat y\quad \text{on} \quad [0,\bar T].
\end{equation}
Due to relation (\ref{eq:time_constrain}), we expect a better reduced-order system in the time interval $[0,\bar T]$ as compared to unconstrained MOR 
approaches for a given order of the reduced system. Such a problem in a view of balanced truncation was 
first considered in \cite{morGawJ90} and its further studied was carried out in \cite{morK17, redmannkuerschner}. However, in this work, we consider 
a similar time-limited model reduction problem but rather in a view of extending the Wilson conditions \cite{wilson1970optimum} and 
 first-order optimality conditions \cite{morGugAB08,  meier1967approximation, wilson1970optimum}. \smallskip 

In Section \ref{sec:problemsetting}, we first propose the time-limited $\mathcal  H_2$-norm for linear systems 
and provide different representations of the metric induced by this norm which are based on time-limited Gramians. Then, 
we define the problem setting for time-limited MOR as an optimization problem. Subsequently, in Section \ref{sec:optimality_condtions}, we extend the 
Wilson conditions to time-limited linear systems and derive  first order optimality conditions, which 
minimize the time-limited $\mathcal H_2$-norm of the error system. Based on these conditions, we propose an iterative scheme, which, upon 
convergence, aims at constructing a reduced-order system, satisfying the optimality conditions approximately. Later on, we derive expressions, 
revealing how far away the obtained reduced systems via the proposed iterative scheme are from being locally optimal. In Section 
\ref{sec:numericalsection}, we illustrate the efficiency of the proposed iterative scheme by three benchmark numerical examples for linear systems. 
Finally, we conclude the paper with a short summary and an outlook for future work.

\section{Time-Limited $\boldsymbol{\mathcal H_2}$-Norm and Problem Setting} \label{sec:problemsetting}

In this section, we first define the time-limited $\mathcal H_2$-norm for linear systems and show its relation to the output error. Furthermore, we 
provide different representations for the time-limited $\mathcal H_2$-norm using time-limited Gramians and then define the time-limited 
$\mathcal H_2$-model reduction problem for linear systems. Before we proceed further, we note important relations between the Kronecker 
product, the vectorization and the trace of  a matrix. These are:
\begin{subequations}
\begin{align}\label{vec_kron_rel}
\vect(X Y Z) &= (Z^T\otimes X) \vect(Y) ,\\ \label{vec_trace_rel}
\trace(X Y Z) & = \left(\vect\left(X^T\right)\right)^T (I\otimes Y)\vect(Z),
\end{align}
\end{subequations}
 where $X, Y$ and $Z$ are matrices of suitable dimensions;  $\vect(\cdot)$ and $\trace(\cdot)$ denote the vectorization and the trace of a 
matrix, and $\otimes$ represents the Kronecker product of two matrices.
\smallskip

We investigate a model reduction problem for the large scale system \eqref{sys:original}; more precisely, we are seeking for a reduced-order system 
\eqref{sys:reduced} having the same structure. Since our goal is to construct a good approximation of the system \eqref{sys:original} on a finite 
time interval $[0, \bar T]$, where $\bar T>0$ is the terminal time, we first 
investigate the worst case error between the output of the system \eqref{sys:reduced} 
and the output of \eqref{sys:original} on $[0, \bar  T]$. In order to find a bound for the error between the output $y$ of the original model and the 
output $\hat y$ of the reduced system, arguments from the case of having an infinite time horizon are used, see, e.g., \cite{morAnt05, morGugAB08}. Similar 
estimates can be found in \cite{redmannbenner, redmannfreitag, BTtyp2EB}, where $\mathcal H_2$-error bounds for more general stochastic systems 
applying balanced truncation are derived.\smallskip

We make use of the explicit representations for the outputs \begin{align*}
 y(t)=C \int_0^t \expn^{A(t-s)} B u(s) ds, \quad \hat y(t)=\hat C \int_0^t  \expn^{\hat A(t-s)} \hat B u(s) ds,
\end{align*}
and obtain that {\allowdisplaybreaks \begin{align*}
\left\|y(t)- \hat y(t)\right\|_2 &=\left\|C \int_0^t \expn^{A(t-s)} B u(s) ds- \hat C \int_0^t  \expn^{\hat A(t-s)} \hat B u(s) 
ds\right\|_2\\&\leq \int_0^t \left\|\left(C \expn^{A(t-s)}  B - \hat C \expn^{\hat A(t-s)} \hat B\right) u(s)\right\|_2 ds\\&\leq \int_0^t \left\| 
C \expn^{A(t-s)} B - \hat C \expn^{\hat A(t-s)} \hat B\right\|_F \left\|u(s)\right\|_2 ds.
\end{align*}}
By the inequality of Cauchy-Schwarz and substitution, we have \begin{align*}
\left\|y(t)- \hat y(t)\right\|_2 &\leq \left(\int_0^t \left\|C \expn^{A(t-s)} B - \hat C \expn^{\hat A(t-s)} \hat B\right\|_F^2 
ds\right)^{\frac{1}{2}} \left(\int_0^t \left\|u(s)\right\|_2^2 ds\right)^{\frac{1}{2}}\\&\leq \left(\int_0^t \left\|C \expn^{As} B - \hat C 
\expn^{\hat As} \hat B\right\|_F^2 ds\right)^{\frac{1}{2}}  \left(\int_0^t \left\|u(s)\right\|_2^2 ds\right)^{\frac{1}{2}} \\&\leq \left(\int_0^{\bar 
T} \left\|C \expn^{As} B - \hat C \expn^{\hat A s} \hat B\right\|_F^2 ds\right)^{\frac{1}{2}}  \left\|u\right\|_{L^2_{\bar T}}
\end{align*}
for $t\in[0, \bar T]$. Hence, \begin{align}\label{relation_H2_output}
\max_{t\in [0, \bar T]}\left\|y(t)- \hat y(t)\right\|_2 \leq \left\|\Sigma-\hat\Sigma\right\|_{\mathcal H_{2, \bar T}}  \left\|u\right\|_{L_{\bar 
T}^2},
\end{align}
where $\left\|\Sigma-\hat\Sigma\right\|_{\mathcal H_{2, \bar T}}:=\left(\int_0^{\bar T} \left\|C \expn^{As} B - \hat C \expn^{\hat A s} \hat 
B\right\|_F^2 ds\right)^{\frac{1}{2}}$. We call $\left\|\cdot\right\|_{\mathcal H_{2, \bar T}}$ the time-limited $\mathcal H_2$-norm since  
$\left\|\Sigma-\hat\Sigma\right\|_{\mathcal H_{2, \bar T}}$ provides the time-domain representation of the metric induced by the $\mathcal H_2$-norm 
if $\bar T\rightarrow \infty$.\smallskip

The time-limited $\mathcal H_2$-error can also be expressed with the help of the time-limited reachability and observability Gramians. We refer, 
e.g., to \cite{morGawJ90} for a further discussion of these Gramians. In order to show the Gramian based representations, we first provide the 
following lemma.  

\begin{lem}\label{lem:time_lim_sylvester}
Let $A_1\in\mathbb{R}^{d_1\times d_1},~A_2\in\mathbb{R}^{d_2\times d_2}$ with $\Lambda(A_1)\cap\Lambda(-A_2)=\emptyset$ and
$K_1\in\mathbb{R}^{d_1\times d_3}$, $K_2\in\mathbb{R}^{d_2\times d_3}$. Then, \begin{align*}
 X=\int_{0}^{\bar T}\expn^{A_1s}K_1K_2^T \expn^{A_2^Ts}ds
\end{align*}
uniquely solves the Sylvester equation
\begin{align}\label{eq:sylvester}
 A_{1} X+X A_{2}^T &=-K_1 K_2^T+\expn^{A_1{\bar T}}K_1K_2^T\expn^{A_2^T{\bar T}}.
\end{align}
\begin{proof}
This result is a consequence of the product rule. Setting $g_1(t):=\expn^{A_1 t}K_1$ and $g_2(t):=K_2^T \expn^{A_2^T t}$, it 
holds that \begin{align*}
&g_1(\bar T)g_2(\bar T)-g_1(0)g_2(0)=\int_{0}^{\bar T}d g_1(s) g_2(s) + \int_{0}^{\bar T}g_1(s)dg_2(s)\\
&= A_1 \int_{0}^{\bar T} g_1(s) g_2(s) ds + \int_{0}^{\bar T}g_1(s)g_2(s)ds\;A_2^T = A_1X +  XA_2^T,      
 \end{align*}
since $dg_2(s)=g_2(s) A_2^T ds$ and $dg_1(s)=A_1 g_1(s) ds$. Furthermore, using~\eqref{vec_kron_rel},  equation~\eqref{eq:sylvester} can be 
written equivalently as 
\begin{align}\label{eq:vec_kron}
\underbrace{\left(I_{d_2}\otimes A_1+A_2\otimes I_{d_1}\right) }_{=:\mathcal A_\otimes}\vect(X)=\vect(R_{12}),
\end{align}
where $R_{12}$ is the right-hand side in (\ref{eq:sylvester}) and $I_q$ denotes the identity matrix of size 
$q\times q$. Now, the eigenvalues of $\mathcal A_\otimes$ are given by $\mu_1^{(i)}+\mu_2^{(j)}$, where $\mu_1^{(i)}$ is the $i$th eigenvalue of $A_1$ 
and $\mu_2^{(j)}$ the $j$th eigenvalue of $A_2$. Due the assumption on the spectra of $A_1$ and $A_2$, the matrix $\mathcal A_\otimes$ is invertible 
which gives a unique solution to (\ref{eq:vec_kron}).
\end{proof}
       \end{lem}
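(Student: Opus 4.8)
The plan is to verify by a direct computation that the proposed integral satisfies the Sylvester equation \eqref{eq:sylvester}, and then to establish uniqueness by passing to Kronecker form.

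For the first step I would introduce the matrix-valued functions $g_1(s):=\expn^{A_1 s}K_1$ and $g_2(s):=K_2^T\expn^{A_2^T s}$, so that $X=\int_0^{\bar T}g_1(s)g_2(s)\,ds$. Since $\tfrac{d}{ds}g_1(s)=A_1 g_1(s)$ and $\tfrac{d}{ds}g_2(s)=g_2(s)A_2^T$, the product rule gives $\tfrac{d}{ds}\big(g_1(s)g_2(s)\big)=A_1 g_1(s)g_2(s)+g_1(s)g_2(s)A_2^T$. Integrating over $[0,\bar T]$ and pulling the constant matrices $A_1,A_2^T$ through the integral yields $g_1(\bar T)g_2(\bar T)-g_1(0)g_2(0)=A_1 X+X A_2^T$. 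Finally $g_1(0)g_2(0)=K_1 K_2^T$ and $g_1(\bar T)g_2(\bar T)=\expn^{A_1\bar T}K_1 K_2^T\expn^{A_2^T\bar T}$, which is precisely the right-hand side of \eqref{eq:sylvester}. The only point requiring (routine) care is justifying that one may differentiate the product entrywise and interchange the constant matrices with the integral, both of which follow from the smoothness of the matrix exponential.

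For uniqueness I would vectorize. Using \eqref{vec_kron_rel} in the forms $\vect(A_1 X)=(I_{d_2}\otimes A_1)\vect(X)$ and $\vect(X A_2^T)=(A_2\otimes I_{d_1})\vect(X)$, the equation \eqref{eq:sylvester} is equivalent to the linear system $\mathcal A_\otimes\vect(X)=\vect(R_{12})$, where $\mathcal A_\otimes:=I_{d_2}\otimes A_1+A_2\otimes I_{d_1}$ and $R_{12}$ denotes the right-hand side of \eqref{eq:sylvester}. It is standard (for instance by triangularizing $A_1$ and $A_2$ over $\C$) that the spectrum of $\mathcal A_\otimes$ is $\{\mu_1+\mu_2:\mu_1\in\Lambda(A_1),~\mu_2\in\Lambda(A_2)\}$. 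The hypothesis $\Lambda(A_1)\cap\Lambda(-A_2)=\emptyset$ says exactly that $\mu_1\neq-\mu_2$ for every such pair, i.e.\ none of these sums vanishes, so $\mathcal A_\otimes$ is invertible and \eqref{eq:sylvester} has a unique solution; since the integral $X$ is a solution by the first step, it is the solution.

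I do not anticipate a genuine obstacle: the content is elementary, and the only real risk is bookkeeping — keeping the transposes and the left/right placement of factors consistent in the product-rule identity (so that $A_2^T$ acts on the right) and in the Kronecker formulas used for the vectorization.
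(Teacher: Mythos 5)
Your proposal is correct and follows essentially the same route as the paper: the product rule applied to $g_1(s)g_2(s)$ to show the integral satisfies \eqref{eq:sylvester}, followed by vectorization to $\mathcal A_\otimes$ and the eigenvalue-sum argument for invertibility and uniqueness. No differences worth noting.
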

The next proposition shows that the time-limited error can be expressed with the help of time-limited Gramians. This result is used later on in order 
to derive first-order necessary conditions for a minimal error in the time-limited $\mathcal H_2$-norm.
\begin{prop}\label{prop:reach_rep}
	Let $\Sigma$ and $\hat \Sigma$ be the original and reduced-order systems as defined in \eqref{sys:original} and \eqref{sys:reduced}. Then, 
the time-limited $\mathcal H_2$-norm of $\Sigma-\hat{\Sigma}$ is given by 
\begin{align}\label{eq:errorequation}
\left\|\Sigma-\hat\Sigma\right\|^2_{\mathcal H_{2, \bar T}}=\trace(C P_{\bar T} C^T)+  \trace(\hat C \hat P_{\bar T} \hat C^T) - 2 
\trace(C P_{2, \bar T} \hat C^T),                                          
\end{align}
where  $P_{\bar T}, P_{2, \bar T}$ and $\hat P_{\bar T}$, respectively, satisfy 
\begin{align}\label{full_reach}
A P_{\bar T}+ P_{\bar T} A^T &=-B B^T+\expn^{A \bar T}B B^T \expn^{A^T \bar T},\\ \label{cross_reach}
A P_{2, {\bar T}}+ P_{2, {\bar T}} \hat A^T &=-B \hat B^T+\expn^{A \bar T}B \hat B^T \expn^{\hat A^T \bar T}, \\ \label{reduced_reach}
\hat A \hat P_{\bar T}+\hat P_{\bar T} \hat A^T &=-\hat B \hat B^T+\expn^{\hat A \bar T}\hat B \hat B^T \expn^{\hat A^T \bar T}.                     
\end{align}
\end{prop}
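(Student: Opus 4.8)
The plan is to start from the time-domain definition
$\left\|\Sigma-\hat\Sigma\right\|^2_{\mathcal H_{2,\bar T}}=\int_0^{\bar T}\left\|C\expn^{As}B-\hat C\expn^{\hat As}\hat B\right\|_F^2\,ds$
and expand the squared Frobenius norm under the integral via $\|M\|_F^2=\trace(MM^T)$. Multiplying out with $M=C\expn^{As}B-\hat C\expn^{\hat As}\hat B$ produces four trace terms: one built purely from the original data, one purely from the reduced data, and two mixed terms. Using $\trace(N)=\trace(N^T)$ together with $\bigl(\hat C\expn^{\hat As}\hat B B^T\expn^{A^Ts}C^T\bigr)^T=C\expn^{As}B\hat B^T\expn^{\hat A^Ts}\hat C^T$, the two mixed terms coincide, so the integrand equals
\[
\trace\!\bigl(C\expn^{As}BB^T\expn^{A^Ts}C^T\bigr)-2\,\trace\!\bigl(C\expn^{As}B\hat B^T\expn^{\hat A^Ts}\hat C^T\bigr)+\trace\!\bigl(\hat C\expn^{\hat As}\hat B\hat B^T\expn^{\hat A^Ts}\hat C^T\bigr).
\]

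Next I would pull the constant outer matrices $C$ and $\hat C$ out of the integral and interchange the integral with the trace; this is legitimate since all objects live in finite dimensions and the integrands are continuous on the compact interval $[0,\bar T]$. The three resulting inner integrals are precisely $P_{\bar T}=\int_0^{\bar T}\expn^{As}BB^T\expn^{A^Ts}\,ds$, $P_{2,\bar T}=\int_0^{\bar T}\expn^{As}B\hat B^T\expn^{\hat A^Ts}\,ds$, and $\hat P_{\bar T}=\int_0^{\bar T}\expn^{\hat As}\hat B\hat B^T\expn^{\hat A^Ts}\,ds$, which immediately yields the right-hand side of \eqref{eq:errorequation}.

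It then remains to identify these three matrices as the solutions of \eqref{full_reach}--\eqref{reduced_reach}. For this I would apply Lemma~\ref{lem:time_lim_sylvester} three times, with $(A_1,A_2,K_1,K_2)=(A,A,B,B)$ for $P_{\bar T}$, with $(A,\hat A,B,\hat B)$ for $P_{2,\bar T}$, and with $(\hat A,\hat A,\hat B,\hat B)$ for $\hat P_{\bar T}$. The disjointness hypothesis $\Lambda(A_1)\cap\Lambda(-A_2)=\emptyset$ required by the lemma holds in each case because $A$ is Hurwitz (and, as is standard, one takes the ROM matrix $\hat A$ to be Hurwitz as well), so the sum of any two eigenvalues with strictly negative real part is nonzero; this simultaneously guarantees that each Gramian is the \emph{unique} solution of its Sylvester equation.

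I do not expect a genuine obstacle here: the argument is essentially bookkeeping. The two points that need a little care are (i) verifying that the two mixed terms are really equal, so that the coefficient $2$ appears in \eqref{eq:errorequation}, and (ii) ensuring that the spectral and invertibility conditions of Lemma~\ref{lem:time_lim_sylvester} are in force, which is exactly where the Hurwitz assumptions on $A$ and $\hat A$ enter.
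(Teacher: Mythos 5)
Your proposal is correct and follows essentially the same route as the paper: expand the squared Frobenius norm under the integral into trace terms, recognize the three integrals as $P_{\bar T}$, $P_{2,\bar T}$, $\hat P_{\bar T}$, and invoke Lemma~\ref{lem:time_lim_sylvester} to identify them with the solutions of \eqref{full_reach}--\eqref{reduced_reach}. Your additional remarks on the equality of the two mixed terms and on the spectral hypothesis of the lemma (Hurwitz $A$, and $\hat A$ for uniqueness) are just slightly more explicit bookkeeping than the paper's own proof, not a different argument.
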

 \begin{proof}
 The definition of the Frobenius norm and the linearity of the integral yield {\allowdisplaybreaks \begin{align*}
 &\left\|\Sigma-\hat\Sigma\right\|^2_{\mathcal H_{2, \bar T}}= \int_0^{\bar T} \left\|C \expn^{As} B - \hat C \expn^{\hat As} \hat B\right\|_F^2 
ds\\&
=\int_0^{\bar T} \trace\left(C \expn^{As}BB^T \expn^{A^Ts} C^T\right)ds+\int_0^{\bar T}\trace\left(\hat C \expn^{\hat As}\hat 
B\hat B^T\expn^{\hat A^Ts}  \hat C^T\right)ds\\&\ \ \ \ -2\int_0^{\bar T}\trace\left(C \expn^{As}B\hat B^T \expn^{\hat A^Ts} \hat C^T\right) 
ds\\&=\trace\left(C P_{\bar T} C^T\right)+\trace\left(\hat C \hat P_{\bar T} \hat C^T\right)-2\;\trace\left(C P_{2, {\bar T}} \hat C^T\right),
\end{align*}}
with $P_{\bar T}:=\int_0^{\bar T}  \expn^{As}BB^T \expn^{A^Ts}ds$, $P_{2, \bar T}:=\int_0^{\bar T}  \expn^{As}B\hat B^T \expn^{\hat A^Ts}ds$, $\hat 
P_{\bar T}:=\int_0^{\bar T} \expn^{\hat As}\hat B\hat B^T\expn^{\hat A^Ts} ds$.
Due to Lemma \ref{lem:time_lim_sylvester} $P_{\bar T}, P_{2, \bar T}$ and $\hat P_{\bar T}$ are the solutions to (\ref{full_reach}), 
(\ref{cross_reach}) and (\ref{reduced_reach}), respectively.
 \end{proof}
The result of Proposition \ref{prop:reach_rep} has the same structure as the error in \cite{redmannkuerschner}, where the case of time-limited 
balanced truncation has been investigated. Moreover, if we take the limit $\bar T\rightarrow \infty$ in (\ref{eq:errorequation}), we obtain a 
representation for the full $\mathcal{H}_2$-error that is, e.g., derived in \cite{morAnt05}. The next proposition shows that the time-limited 
$\mathcal H_2$-norm of the error system as in Proposition \ref{prop:reach_rep} can be rewritten using the time-limited observability Gramians.
\begin{prop}\label{prop:express_error_obs}
Let $\Sigma$ and $\hat \Sigma$ be the original and reduced-order systems as defined in \eqref{sys:original} and \eqref{sys:reduced}. Moreover, let 
$P_{\bar T}, P_{2, \bar T}$ and $\hat P_{\bar T}$ be the solutions to \eqref{full_reach},  \eqref{cross_reach} and \eqref{reduced_reach}, 
respectively. Then, the following holds: 
 \begin{align*}
\trace(C P_{\bar T} C^T)&=\trace(B^T Q_{\bar T} B),\\  \trace(\hat C \hat P_{\bar T} \hat C^T)&=\trace(\hat B^T \hat Q_{\bar T} \hat B),\\ 
\trace(C P_{2, \bar T} \hat C^T)&=\trace(\hat B^T Q_{2, \bar T} B),                                          
                                                             \end{align*}
 where the matrices $Q_{\bar T}, Q_{2, \bar T}$ and $\hat Q_{\bar T}$ satisfy \begin{align}\label{full_obs}
A^T Q_{\bar T}+ Q_{\bar T} A &=-C^T C+\expn^{A^T \bar T}C^T C \expn^{A \bar T},\\ \label{cross_obs}
\hat A^T Q_{2, {\bar T}}+ Q_{2, {\bar T}} A &=-\hat C^T C+\expn^{\hat A^T \bar T} \hat C^T C \expn^{A \bar T}, \\ \label{reduced_obs}
\hat A^T \hat Q_{\bar T}+\hat Q_{\bar T} \hat A &=-\hat C^T \hat C+\expn^{\hat A^T \bar T}\hat C^T \hat C \expn^{\hat A \bar T}.
                                          \end{align}
                                          \begin{proof}
We insert the integral representations of $P_{\bar T}, P_{2, \bar T}$ and $\hat P_{\bar T}$ and use basic properties of the trace operator. Thus, 
\begin{align*}
\trace(C P_{\bar T} C^T)&=\int_0^{\bar T}  \trace(C \expn^{As}BB^T \expn^{A^Ts}C^T)ds=\int_0^{\bar T}  \trace(B^T \expn^{A^Ts}C^TC \expn^{As}B)ds,\\  
\trace(\hat C \hat P_{\bar T} \hat C^T)&=\int_0^{\bar T} \trace(\hat C \expn^{\hat As}\hat B\hat B^T\expn^{\hat A^Ts} \hat C^T) ds=\int_0^{\bar T} 
\trace(\hat B^T\expn^{\hat A^Ts} \hat C^T \hat C \expn^{\hat As}\hat B) ds,\\ 
\trace(C P_{2, \bar T} \hat C^T)&=\int_0^{\bar T}  \trace(C\expn^{As}B\hat B^T \expn^{\hat A^Ts} \hat C^T)ds=\int_0^{\bar T}  \trace(\hat 
B^T\expn^{\hat A^Ts}\hat C^T C\expn^{As}B)ds.                                          
                                                             \end{align*}
Let us define $Q_{\bar T}:=\int_0^{\bar T}  \expn^{A^Ts}C^TC \expn^{As}ds$, $Q_{2, \bar T}:=\int_0^{\bar T}  \expn^{\hat A^Ts}\hat C^TC 
\expn^{As}ds$ and $\hat Q_{\bar T}:=\int_0^{\bar T} \expn^{\hat A^Ts} \hat C^T \hat C \expn^{\hat As} ds$. Then, applying Lemma 
\ref{lem:time_lim_sylvester} yields the claim.
\end{proof}
\end{prop}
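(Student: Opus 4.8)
The plan is to reduce everything to Lemma~\ref{lem:time_lim_sylvester} applied to observability-type integrals, after rewriting the three trace terms from Proposition~\ref{prop:reach_rep} via the cyclic invariance of the trace. First I would insert the explicit integral representations $P_{\bar T}=\int_0^{\bar T}\expn^{As}BB^T\expn^{A^Ts}ds$, $P_{2,\bar T}=\int_0^{\bar T}\expn^{As}B\hat B^T\expn^{\hat A^Ts}ds$, $\hat P_{\bar T}=\int_0^{\bar T}\expn^{\hat As}\hat B\hat B^T\expn^{\hat A^Ts}ds$ into $\trace(CP_{\bar T}C^T)$, $\trace(\hat C\hat P_{\bar T}\hat C^T)$ and $\trace(CP_{2,\bar T}\hat C^T)$, pulling the trace inside the integral by linearity. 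Then, in each integrand, I would cyclically permute the factors so that the input matrices $B,\hat B$ sit on the outside and the output matrices are sandwiched between the exponentials: for instance $\trace(C\expn^{As}BB^T\expn^{A^Ts}C^T)=\trace(B^T\expn^{A^Ts}C^TC\expn^{As}B)$, and analogously for the other two terms, the cross term becoming $\trace(\hat B^T\expn^{\hat A^Ts}\hat C^TC\expn^{As}B)$. Integrating over $[0,\bar T]$ and factoring out $B$ and $\hat B$ then identifies the middle integrals as precisely $Q_{\bar T}=\int_0^{\bar T}\expn^{A^Ts}C^TC\expn^{As}ds$, $\hat Q_{\bar T}=\int_0^{\bar T}\expn^{\hat A^Ts}\hat C^T\hat C\expn^{\hat As}ds$, and $Q_{2,\bar T}=\int_0^{\bar T}\expn^{\hat A^Ts}\hat C^TC\expn^{As}ds$, which gives the three claimed trace identities at once.

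It then remains to verify that these Gramians solve the Sylvester equations~\eqref{full_obs}--\eqref{reduced_obs}. Here I would invoke Lemma~\ref{lem:time_lim_sylvester} with the choices $(A_1,A_2,K_1,K_2)=(A^T,A^T,C^T,C^T)$ for $Q_{\bar T}$, $(\hat A^T,\hat A^T,\hat C^T,\hat C^T)$ for $\hat Q_{\bar T}$, and $(\hat A^T,A^T,\hat C^T,C^T)$ for $Q_{2,\bar T}$; in each case the right-hand side supplied by the lemma reproduces exactly the right-hand side of the corresponding equation. The only hypothesis that needs checking is the spectral disjointness $\Lambda(A_1)\cap\Lambda(-A_2)=\emptyset$, which here reads $\Lambda(A)\cap\Lambda(-A)=\emptyset$, respectively $\Lambda(\hat A)\cap\Lambda(-\hat A)=\emptyset$ and $\Lambda(\hat A)\cap\Lambda(-A)=\emptyset$. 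Since $A$ is Hurwitz (and the reduced system is taken asymptotically stable, so $\hat A$ is Hurwitz as well), all these spectra lie in $\C_{-}$ while their negatives lie in the open right half-plane, so the intersections are empty and the Sylvester equations have unique solutions.

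Since the bulk of the argument is a mechanical rearrangement under the trace together with an appeal to an already-established lemma, I do not anticipate a genuine obstacle. The only point requiring a moment's care is matching the transpose and Kronecker conventions so that the cross term $Q_{2,\bar T}$ ends up on the correct side of~\eqref{cross_obs} (namely $\hat A^T$ acting from the left and $A$ from the right), and confirming that the stability assumptions in force really do suffice for the spectral condition of Lemma~\ref{lem:time_lim_sylvester}.
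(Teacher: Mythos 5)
Your proposal is correct and follows essentially the same route as the paper: insert the integral representations of $P_{\bar T}$, $P_{2,\bar T}$, $\hat P_{\bar T}$, use cyclic invariance of the trace to recognize the time-limited observability Gramians, and then apply Lemma~\ref{lem:time_lim_sylvester} (with exactly your choices of $A_1,A_2,K_1,K_2$) to obtain \eqref{full_obs}--\eqref{reduced_obs}. Your extra check of the spectral-disjointness hypothesis is a fine point the paper leaves implicit, but it changes nothing substantive.
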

From inequality~\eqref{relation_H2_output}, it can be seen that it makes sense to minimize $\left\|\Sigma-\hat\Sigma\right\|^2_{\mathcal H_{2, \bar 
T}}$ with respect to the reduced order matrices $\hat A$, $\hat B$ and $\hat C$ since a small $\mathcal H_{2, \bar T}$-error ensures a small output 
error. Due to the fact that $\left\|\Sigma-\hat\Sigma\right\|_{\mathcal H_{2, \bar T}}$ is increasing in $\bar T$, the time-limited 
error is less or equal to the error in the full $\mathcal H_2$-norm $\left\|\cdot\right\|_{\mathcal H_{2, \infty}}$. Thus, 
$\left\|\cdot\right\|_{\mathcal H_{2, \bar T}}$ provides a more accurate bound than $\left\|\cdot\right\|_{\mathcal H_{2, \infty}}$ for the output 
error in~\eqref{relation_H2_output}. By minimizing $\left\|\cdot\right\|_{\mathcal H_{2, \bar T}}$, we hope to find a reduce order 
model on $[0, \bar T]$ with an accuracy that is better than in the case of having a locally optimal reduced system with respect to 
$\left\|\cdot\right\|_{\mathcal H_{2, \infty}}$. 

\section{First-Order Necessary Conditions for Optimality and Model Order Reduction}\label{sec:optimality_condtions}

In this section, we begin by deriving first-order necessary conditions for time-limited $\mathcal H_2$-optimal reduced order systems. In other 
words, our aim is to construct a reduced-order system $\hat{\Sigma}$ of order $r$ as in \eqref{sys:reduced}, such that it minimizes 
$\|\Sigma-\hat{\Sigma}\|^2_{\mathcal H_{2,\bar T}} =: \mathcal E$, where $\Sigma$ is the original system as in \eqref{sys:original}. An expression 
for $\mathcal E$ is given in \eqref{eq:errorequation}.  Since the term $\trace(C P_{\bar T} C^T)$ in \eqref{eq:errorequation} does not depend on the 
reduced order matrices, we focus on minimizing the expression\begin{align}\label{reduce_expression}  
\mathcal E_r := \trace(\hat C \hat P_{\bar T} \hat C^T) - 2 \trace(C P_{2, \bar T} \hat C^T).
\end{align} 
Before proceeding further, we assume that the matrix $\hat A$ in (\ref{sys:reduced}) is diagonalizable, i.e., there exists an invertible matrix $S$ 
such that $\hat A=S^{-1} D S$, where $D=\diag(\lambda_1, \ldots, \lambda_r)$. Using the 
matrix $S$ as a state-space transformation of \eqref{sys:reduced}, the term (\ref{reduce_expression}) can be equivalently rewritten as 
\begin{align}\nonumber 
\mathcal E_r&=\trace(\hat C S^{-1} S \hat P_{\bar T}S^TS^{-T} \hat C^T) - 2 \trace(C 
P_{2, \bar T} S^T S^{-T}\hat C^T)\\
&=\trace(\tilde C \tilde P_{\bar T} \tilde C^T) - 2 \trace(C \tilde P_{2, \bar T}  \tilde C^T),\label{trans_min}
\end{align}
where $\tilde C=\hat C S^{-1}$, $\tilde P_{\bar T}=S \hat P_{\bar T}S^T$ and $\tilde P_{2, \bar T}=P_{2, \bar T} S^T$. Furthermore, it can be shown
that the matrices $\tilde P_{\bar T}$ and  $\tilde P_{2, \bar T}$ are the solutions to
\begin{align}\label{trans_full_reach}
A \tilde P_{2, {\bar T}}+ \tilde P_{2, {\bar T}} D &=-B \tilde B^T+\expn^{A \bar T}B \tilde B^T \expn^{D \bar T}, \\ 
\label{trans_reduced_reach}
D\tilde P_{\bar T}+\tilde P_{\bar T} D &=-\tilde B \tilde B^T+\expn^{D \bar T}\tilde B \tilde B^T \expn^{D \bar T},                   
\end{align}
respectively, where $\tilde B=S\hat B$. Precisely, Equation (\ref{trans_full_reach}) is obtained by multiplying (\ref{cross_reach})
with $S^T$ from the right side, and Equation  \eqref{trans_reduced_reach} is derived by multiplying (\ref{reduced_reach}) with
$S$ and $S^T$ from the left and the right side, respectively, and using the relation $\expn^{\hat A \bar T}=S^{-1} \expn^{D \bar T}S$. \smallskip

In order to find necessary conditions for a locally minimal transformed error expression (\ref{trans_min}), we compute the partial derivatives of the 
form $\partial_x \trace(\tilde C \tilde P_{\bar T} \tilde C^T)$ and $\partial_x \trace(C \tilde P_{2, \bar T} \tilde C^T)$ and then set \begin{align*}
 \partial_x \trace(\tilde C \tilde P_{\bar T} \tilde C^T)=  2 \partial_x \trace(C \tilde P_{2, \bar T} \tilde C^T),                                    
                                                                         \end{align*}
where $x=\lambda_i, \tilde c_{ki}, \tilde b_{ij}$, $i \in \{1,\ldots,r\}$, $j \in \{1,\ldots,m\}$, $k \in \{1,\ldots,p\}$ and $\tilde c_{ki}$, 
$\tilde b_{ij}$ being $kj$-th and $ij$-th elements of the matrices $\tilde C$ and $\tilde B$, respectively.

Let us start with the optimality conditions with respect to $\tilde c_{ki}$. With $e_i$, we denote the $i$-th column of the identity matrix of 
suitable dimension that is clear from the context. We then obtain that \begin{align*}
        \partial_{\tilde c_{ki}} \trace(\tilde C \tilde P_{\bar T} \tilde C^T) &=   \partial_{\tilde c_{ki}} \trace(\tilde C^T\tilde C \tilde P_{\bar T})\\
        &= \trace((\partial_{\tilde c_{ki}}\tilde C^T)\tilde C \tilde P_{\bar 
T}+\tilde C^T (\partial_{\tilde c_{ki}}\tilde C) \tilde P_{\bar T})=\trace(e_ie_k^T\tilde C \tilde P_{\bar T}+\tilde C^T e_k e_i^T \tilde 
P_{\bar T})\\
&=2 e_k^T\tilde C \tilde P_{\bar T}e_i,
                                                                                                                 \end{align*}
where we have used the linearity of the trace, the product rule and the fact that $\tilde P_{\bar T}$ does not depend on $\tilde C$. Since 
\begin{align*}
\partial_{\tilde c_{ki}} \trace(C \tilde P_{2, \bar T} \tilde C^T)=\trace(C \tilde P_{2, \bar T} e_ie_k^T)= e_k^T C \tilde P_{2, \bar T} e_i,
\end{align*}
the optimality condition with respect to $\tilde c_{ki}$ is $e_k^T \tilde C \tilde P_{\bar T} e_i=e_k^T C \tilde P_{2, \bar T} e_i$ for all $i \in 
\{1,\ldots,r\}$,  $k \in \{1,\ldots,p\}$. 
Hence, we obtain \begin{align}\label{opt1}
  \tilde C \tilde P_{\bar T} = C \tilde P_{2, \bar T}.               
                 \end{align}
We now derive the partial derivatives with respect to $\tilde b_{ij}$. We rewrite (\ref{trans_min}) to simplify this procedure by applying Proposition 
\ref{prop:express_error_obs}: \begin{align*}
      \mathcal E_r = \trace(\tilde C \tilde P_{\bar T} \tilde C^T) - 2 \trace(C \tilde P_{2, \bar T}  \tilde C^T)&= \trace(\hat B^T \hat Q_{\bar T} \hat B)-2 
\trace(\hat B^T Q_{2, \bar T} B)\\&= \trace(\tilde B^T \tilde Q_{\bar T} \tilde B)-2 \trace(\tilde B^T \tilde Q_{2, \bar T} B),
                              \end{align*}
where $\tilde Q_{\bar T}=S^{-T}\hat Q_{\bar T}S^{-1}$ and $\tilde Q_{2, \bar T}=S^{-T}\hat Q_{2, \bar T}$. The matrices $\tilde Q_{\bar T}$ and 
$\tilde Q_{2, \bar T}$ satisfy \begin{align}\label{trans_cross_obs}
D \tilde Q_{2, {\bar T}}+ \tilde Q_{2, {\bar T}} A &=-\tilde C^T C+\expn^{D \bar T} \tilde C^T C \expn^{D \bar T}, \\ 
\label{trans_reduced_obs}
D \tilde Q_{\bar T}+\tilde Q_{\bar T} D &=-\tilde C^T \tilde C+\expn^{D \bar T}\tilde C^T \tilde C \expn^{D \bar T},
                                          \end{align}
respectively. Again, Equation (\ref{trans_cross_obs}) is obtained by multiplying (\ref{cross_obs}) with $S^{-T}$ from the left side, and we find 
\eqref{trans_reduced_obs} by multiplying (\ref{reduced_obs}) with $S^{-T}$ from the left side and with $S^{-1}$ from the right side. Thus, we have 
\begin{align*}
  \partial_{\tilde b_{ij}}  \trace(\tilde B\tilde B^T \tilde Q_{\bar T})&= \trace((\partial_{\tilde b_{ij}} \tilde B)\tilde B^T \tilde Q_{\bar 
T}+ \tilde B(\partial_{\tilde b_{ij}}\tilde B^T) \tilde Q_{\bar T})= \trace(e_i e_j^T\tilde B^T \tilde Q_{\bar 
T}+ \tilde B e_j e_i^T \tilde Q_{\bar T})\\&= 2 e_i^T \tilde Q_{\bar T}\tilde B e_j 
    \end{align*}
    using that $\tilde Q_{\bar T}$ does not depend on $\tilde B$ or $\tilde b_{ij}$. Since \begin{align*}
\partial_{\tilde b_{ij}} \trace(\tilde B^T \tilde Q_{2, \bar T} B)= \trace(e_j e_i^T \tilde Q_{2, \bar T} B)= e_i^T \tilde Q_{2, \bar T} B e_j,
\end{align*}
it is necessary that $e_i^T \tilde Q_{\bar T}\tilde B e_j = e_i^T \tilde Q_{2, \bar T} B e_j$ for $i \in \{1,\ldots,r\}$, $j \in \{1,\ldots,m\}$, 
which can be equivalently written as
\begin{align}\label{opt2}
 \tilde Q_{\bar T}\tilde B  =  \tilde Q_{2, \bar T} B.
\end{align}
Next, we first introduce the following lemma in order to derive an optimality condition with respect to the eigenvalues $\lambda_i$ of $\hat A$.
\begin{lem}\label{lem:eq:derivatives} 
The partial derivatives $X^{(i)}:=\partial_{\lambda_i} \tilde P_{\bar T}$ and $X_2^{(i)}:=\partial_{\lambda_i} \tilde P_{2, \bar T}$ solve 
\begin{align}\label{bgpk}
&DX^{(i)}+X^{(i)} D =-e_ie_i^T\tilde P_{\bar T}-\tilde P_{\bar T}e_ie_i^T+\bar T 
e_ie_i^T\expn^{D \bar T}\tilde B \tilde B^T \expn^{D \bar T}+\bar T\expn^{D \bar T}\tilde B \tilde B^T \expn^{D \bar T}e_ie_i^T,                 \\
&A X_2^{(i)}+ X_2^{(i)} D =-\tilde P_{2, {\bar T}} e_ie_i^T+\bar T \expn^{A 
	\bar T}B \tilde B^T \expn^{D \bar T}e_ie_i^T, \label{bgpk2}
                                       \end{align}
respectively.
\begin{proof}
The derivative of the left side of equation (\ref{trans_full_reach}) is\begin{align*}
A X_2^{(i)}+ X_2^{(i)} D + \tilde P_{2, {\bar T}}e_i e_i^T
\end{align*} applying the product rule. The derivative of the corresponding right side is \begin{align*}
\expn^{A \bar T}B \tilde B^T \partial_{\lambda_i} \expn^{D \bar T} = \expn^{A \bar T}B \tilde B^T \expn^{D \bar T} e_i e_i^T 
\bar T, 
\end{align*}
because $\partial_{\lambda_i} \expn^{D \bar T}=\partial_{\lambda_i} \diag(\expn^{\lambda_1 \bar T}, \ldots, \expn^{\lambda_i \bar T}, \ldots, 
\expn^{\lambda_r \bar T})=\diag(0, \ldots, \bar T\expn^{\lambda_i \bar T}, \ldots, 0)$. This yields (\ref{bgpk}). Applying $\partial_{\lambda_i}$ to 
the left of equation (\ref{trans_reduced_reach}) provides
\begin{align*}
e_ie_i^T\tilde P_{\bar T}+D X^{(i)}+X^{(i)} D+\tilde P_{\bar T} e_ie_i^T                   
                                       \end{align*}
again using the product rule. Doing the same with the corresponding right side, we have \begin{align*}
  \partial_{\lambda_i}(\expn^{D \bar T}\tilde B \tilde B^T \expn^{D \bar T})&=    (\partial_{\lambda_i}\expn^{D \bar T})\tilde B \tilde B^T \expn^{D 
\bar T}+ \expn^{D \bar T}\tilde B \tilde B^T (\partial_{\lambda_i}\expn^{D \bar T})\\ &=   \bar T e_i e_i^T\expn^{D \bar T}\tilde B \tilde B^T 
\expn^{D \bar T}+ \expn^{D \bar T}\tilde B \tilde B^T \expn^{D \bar T} e_i e_i^T \bar T.                                                           
                                 \end{align*}
This provides (\ref{bgpk2}).
\end{proof}
\end{lem}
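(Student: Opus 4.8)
The plan is to derive each identity by differentiating, with respect to $\lambda_i$, the Sylvester equation that defines the corresponding Gramian: \eqref{trans_full_reach} for $\tilde P_{2,\bar T}$ and \eqref{trans_reduced_reach} for $\tilde P_{\bar T}$. First I would record the two elementary facts that drive the computation. Since $D=\diag(\lambda_1,\dots,\lambda_r)$ depends on $\lambda_i$ only through its $i$th diagonal entry, $\partial_{\lambda_i}D=e_ie_i^T$; and since $\expn^{D\bar T}=\sum_j \expn^{\lambda_j\bar T}e_je_j^T$, we get $\partial_{\lambda_i}\expn^{D\bar T}=\bar T\,\expn^{\lambda_i\bar T}e_ie_i^T=\bar T\,e_ie_i^T\expn^{D\bar T}=\bar T\,\expn^{D\bar T}e_ie_i^T$, the last equalities holding because diagonal matrices commute. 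The matrices $A$, $B$ and $\tilde B$ carry no $\lambda_i$-dependence. Differentiability of $\tilde P_{\bar T}$ and $\tilde P_{2,\bar T}$ in $\lambda_i$ is not an issue: either invoke their integral representations $\tilde P_{\bar T}=\int_0^{\bar T}\expn^{Ds}\tilde B\tilde B^T\expn^{Ds}\,ds$ and $\tilde P_{2,\bar T}=\int_0^{\bar T}\expn^{As}B\tilde B^T\expn^{Ds}\,ds$, whose integrands are smooth in the parameters over a finite interval, or note that by Lemma~\ref{lem:time_lim_sylvester} they solve linear systems with invertible, smoothly parametrised coefficient matrices.

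For \eqref{bgpk2} I would apply $\partial_{\lambda_i}$ to both sides of \eqref{trans_full_reach}. The product rule turns the left-hand side $A\tilde P_{2,\bar T}+\tilde P_{2,\bar T}D$ into $AX_2^{(i)}+X_2^{(i)}D+\tilde P_{2,\bar T}e_ie_i^T$; on the right-hand side the constant term $-B\tilde B^T$ vanishes and the only $\lambda_i$-dependent factor is $\expn^{D\bar T}$, producing $\bar T\,\expn^{A\bar T}B\tilde B^T\expn^{D\bar T}e_ie_i^T$. Moving $\tilde P_{2,\bar T}e_ie_i^T$ to the right gives \eqref{bgpk2}. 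For \eqref{bgpk} I would do the same with \eqref{trans_reduced_reach}: the left-hand side $D\tilde P_{\bar T}+\tilde P_{\bar T}D$ becomes $e_ie_i^T\tilde P_{\bar T}+DX^{(i)}+X^{(i)}D+\tilde P_{\bar T}e_ie_i^T$, while on the right $-\tilde B\tilde B^T$ drops out and the product rule applied to $\expn^{D\bar T}\tilde B\tilde B^T\expn^{D\bar T}$ yields the two terms $\bar T\,e_ie_i^T\expn^{D\bar T}\tilde B\tilde B^T\expn^{D\bar T}$ and $\bar T\,\expn^{D\bar T}\tilde B\tilde B^T\expn^{D\bar T}e_ie_i^T$; transposing the $\tilde P_{\bar T}$-terms to the right-hand side produces \eqref{bgpk}.

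Optionally I would add that these two equations actually pin down $X^{(i)}$ and $X_2^{(i)}$ uniquely, since $D$ and $A$ both have spectrum in $\C_-$, so the operators $X\mapsto DX+XD$ and $X_2\mapsto AX_2+X_2D$ are invertible (their Kronecker representations have eigenvalues $\lambda_i+\lambda_j$ and $\nu+\lambda_j$ with $\nu\in\Lambda(A)$, all in $\C_-$); but the lemma only asserts that the stated derivatives satisfy the equations, which the differentiation already delivers. There is no genuine obstacle here; the only place to be careful is bookkeeping — keeping track that $\partial_{\lambda_i}$ hits $D$ both as the linear coefficient and inside $\expn^{D\bar T}$, and that in \eqref{trans_full_reach} it leaves the factor $\expn^{A\bar T}B$ untouched while in \eqref{trans_reduced_reach} it acts on two copies of $\expn^{D\bar T}$.
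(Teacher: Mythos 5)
Your proposal is correct and takes essentially the same approach as the paper: differentiate the Sylvester equations \eqref{trans_full_reach} and \eqref{trans_reduced_reach} with respect to $\lambda_i$ via the product rule, using $\partial_{\lambda_i}D=e_ie_i^T$ and $\partial_{\lambda_i}\expn^{D\bar T}=\bar T\,\expn^{D\bar T}e_ie_i^T$. Incidentally, you attach the conclusions to the correct equations (differentiating \eqref{trans_full_reach} gives \eqref{bgpk2} and \eqref{trans_reduced_reach} gives \eqref{bgpk}), whereas the paper's proof text swaps these two labels; your optional remark on unique solvability of the derived Sylvester equations is fine but not needed for the claim.
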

Before we proceed further, let us introduce the infinite Gramian $\tilde Q_{\infty}$, which we define as the solution to \begin{align}\label{infobsgram}  
D \tilde Q_{\infty}+\tilde Q_{\infty} D &=-\tilde C^T \tilde C.
                                          \end{align}    
It is well-defined if $D$ and $-D$ have no common eigenvalues. We insert matrix equation (\ref{infobsgram}) to
\begin{align*}
\partial_{\lambda_i}\trace(\tilde C \tilde P_{\bar T} \tilde C^T)=\trace(\tilde C^T\tilde C X^{(i)})=-\trace([D \tilde Q_{\infty}+\tilde Q_{\infty}D] 
X^{(i)})=-\trace(\tilde Q_{\infty}[X^{(i)} D +D X^{(i)}]).
\end{align*}
With Lemma \ref{lem:eq:derivatives}, we get \begin{align*}
\partial_{\lambda_i}\trace(\tilde C \tilde P_{\bar T} \tilde C^T)&= \trace(\tilde Q_{\infty}[e_ie_i^T\tilde P_{\bar T}+\tilde P_{\bar T}e_ie_i^T-\bar 
T e_ie_i^T\expn^{D \bar T}\tilde B \tilde B^T \expn^{D \bar T}-\bar T\expn^{D \bar T}\tilde B \tilde B^T \expn^{D \bar T}e_ie_i^T])\\&= 
2 e_i^T\tilde Q_{\infty}[\tilde P_{\bar T}-\bar T\expn^{D \bar T}\tilde B \tilde B^T \expn^{D \bar T}]e_i.
\end{align*}
Assuming that $D$ and $-A$ have no common eigenvalues, we define the infinite cross Gramian $\tilde Q_{2, \infty}$ which satisfies 
\begin{align*}
D \tilde Q_{2, {\infty}}+ \tilde Q_{2, {\infty}} A^T =-\tilde C^T C.
\end{align*}
Hence, it holds that \begin{align*}
\partial_{\lambda_i}\trace(C \tilde P_{2 \bar T} \tilde C^T)&=\trace(\tilde C^TC X_2^{(i)})=-\trace([D \tilde Q_{2, \infty}+\tilde Q_{2, \infty}A] 
X_2^{(i)})\\&=-\trace(\tilde Q_{2, \infty}[X_2^{(i)} D +A X_2^{(i)}])=\trace(\tilde Q_{2, \infty}[\tilde P_{2, {\bar T}}-\bar T \expn^{A 
\bar T}B \tilde B^T \expn^{D \bar T}]e_ie_i^T)\\&=e_i^T\tilde Q_{2, \infty}[\tilde P_{2, {\bar T}}-\bar T \expn^{A 
\bar T}B \tilde B^T \expn^{D \bar T}]e_i
\end{align*}
applying Lemma \ref{lem:eq:derivatives} again. This leads to the third optimality condition which is \begin{align}\label{opt3}
e_i^T\tilde Q_{2, \infty}[\tilde P_{2, {\bar T}}-\bar T \expn^{A 
\bar T}B \tilde B^T \expn^{D \bar T}]e_i=
 e_i^T\tilde Q_{\infty}[\tilde P_{\bar T}-\bar T\expn^{D \bar T}\tilde B \tilde B^T \expn^{D \bar T}]e_i
\end{align}
for all $i\in\{1, \ldots, r\}$.\smallskip

Below, the generalized optimality conditions are summarized that have been derived above. Additionally, we provide an 
equivalent Kronecker formulation in the next theorem that is useful for the error analysis in the optimality conditions.

A different  type of extended Wilson conditions for bilinear systems has been shown in \cite{morZhaL02}. Its equivalent Kronecker formulation is presented in 
\cite{morBenB12b}. Since the bilinear setting is very different from the time-limited case, the optimality conditions have a different structure  which can be seen in the next theorem. 
\begin{thm}\label{thm:opt_cond}
 Let the reduced-order system (\ref{sys:reduced}) be a locally optimal approximation to the original system (\ref{sys:original}) with respect to 
$\left\|\cdot\right\|_{\mathcal H_{2, \bar T}}$. Then, conditions (\ref{opt1}), (\ref{opt2}) and (\ref{opt3}) hold or equivalently, we have 
\begin{equation}\begin{aligned}\label{kron_opt1}
&(I\otimes \hat C)  \left[(I\otimes \hat A)+(D\otimes I)\right]^{-1}(\expn^{D \bar T}\tilde B \otimes \expn^{\hat 
A \bar T}\hat B -\tilde B \otimes \hat B) \vect(I)\\ &=
 (I\otimes C)  \left[(I\otimes A)+(D\otimes I)\right]^{-1} (\expn^{D \bar T}\tilde B 
\otimes \expn^{ A \bar T} B -\tilde B \otimes B) \vect(I),\end{aligned}\end{equation}
        \begin{equation}  \begin{aligned}\label{kron_opt2}
&(\hat B^T\otimes I) \left[(I\otimes D)+(\hat A^T\otimes 
I)\right]^{-1}(\expn^{\hat A^T \bar T}\hat C^T \otimes \expn^{D \bar T}\tilde C^T-\hat C^T \otimes \tilde C^T)\vect(I)\\ 
&= (B^T\otimes I) \left[(I\otimes D)+(A^T\otimes I)\right]^{-1}(\expn^{A^T \bar T} C^T \otimes \expn^{D 
\bar T}\tilde C^T-C^T \otimes \tilde C^T)\vect(I)\end{aligned}\end{equation}
and for all $i=1, \ldots, r$
    \begin{equation}\label{kron_opt3}\begin{aligned}  
  &\vect^T(I) (\hat C \otimes \tilde C) \left[(I\otimes D)+(\hat 
A\otimes I)\right]^{-1} (I\otimes e_i e_i^T) \\  & \quad \times \left( \left[(I\otimes 
D)+(\hat A\otimes I)\right]^{-1}  (\expn^{\hat A \bar T} 
\hat B \otimes \expn^{D \bar T}\tilde B -\hat B \otimes \tilde B)- (\bar T\expn^{\hat A \bar T}\hat B \otimes \expn^{D \bar T}\tilde B)  \right) 
\vect(I) 
\\&=  \vect^T(I) ( C \otimes \tilde C) \left[(I\otimes D)+(A\otimes I)\right]^{-1} (I\otimes e_i e_i^T) \\ & \quad 
\times \left( \left[(I\otimes D)+(A\otimes I)\right]^{-1}  (\expn^{A \bar T} B \otimes \expn^{D \bar T}\tilde B -B \otimes \tilde 
B)-(\bar T\expn^{A \bar T}B \otimes \expn^{D \bar T}\tilde B)\right) \vect(I).
\end{aligned}
\end{equation}
 \begin{proof}
 Applying the $\vect$ operator to (\ref{opt1}) leads to the following equivalent formulation:
 \begin{align*}
\vect(\tilde C \tilde P_{\bar T}) = \vect(C \tilde P_{2, \bar T}).
\end{align*}
Now, using the vectorization of  (\ref{trans_reduced_reach}) and the relation in (\ref{vec_kron_rel}), we obtain 
   \begin{align*}
 & \vect(\tilde C \tilde P_{\bar T})=(I\otimes \tilde C) \vect(\tilde P_{\bar T})= (I\otimes \tilde C)  \left[(I\otimes D)+(D\otimes 
I)\right]^{-1}\vect(\expn^{D \bar T}\tilde B \tilde B^T \expn^{D \bar T}-\tilde B \tilde B^T)\\&= 
(I\otimes \tilde C)  \left[(I\otimes D)+(D\otimes I)\right]^{-1}(\expn^{D \bar T}\tilde B \otimes \expn^{D \bar T}\tilde B -\tilde B \otimes 
\tilde B) \vect(I).    
                 \end{align*}
Since $(I\otimes \tilde C)=(I\otimes \hat C)(I\otimes S)^{-1}$ and $(\expn^{D \bar T}\tilde B \otimes \expn^{D \bar T}\tilde B -\tilde B \otimes 
\tilde B)=(I\otimes S^{-1})^{-1} (\expn^{D \bar T}\tilde B \otimes \expn^{\hat A \bar T}\hat B -\tilde B \otimes \hat B)$, we get \begin{align*}
 & \vect(\tilde C \tilde P_{\bar T})=(I\otimes \hat C)  \left[(I\otimes \hat A)+(D\otimes I)\right]^{-1}(\expn^{D \bar T}\tilde B \otimes \expn^{\hat 
A \bar T}\hat B -\tilde B \otimes \hat B) \vect(I).    
                 \end{align*}
With the help of (\ref{trans_full_reach}), the vectorization of $C \tilde P_{2, \bar T}$ is given by 
\begin{align*}
 &\vect(C \tilde P_{2, \bar T})= (I\otimes C) \vect(\tilde P_{2, \bar T})= (I\otimes C)  \left[(I\otimes A)+(D\otimes 
I)\right]^{-1}\vect(\expn^{A \bar T} B \tilde B^T \expn^{D \bar T}-B \tilde B^T)\\&= (I\otimes C)  \left[(I\otimes A)+(D\otimes 
I)\right]^{-1} (\expn^{D \bar T}\tilde B \otimes \expn^{ 
A \bar T} B -\tilde B \otimes B) \vect(I)
\end{align*}
applying (\ref{vec_kron_rel}) again, thus (\ref{kron_opt1}) follows. Condition (\ref{opt2}) is equivalent to \begin{align*}
 \vect(\tilde Q_{\bar T}\tilde B)  =  \vect(\tilde Q_{2, \bar T} B),
\end{align*}
and with property (\ref{vec_kron_rel}), it holds that \begin{align*}
  &\vect(\tilde Q_{\bar T}\tilde B)=(\tilde B^T\otimes I) \vect(\tilde Q_{\bar T})\\&=(\tilde B^T\otimes I) \left[(I\otimes D)+(D\otimes 
I)\right]^{-1}(\expn^{D \bar T}\tilde C^T \otimes \expn^{D \bar T}\tilde C^T-\tilde C^T \otimes \tilde C^T)\vect(I)
                    \end{align*}
inserting the vectorized representation of (\ref{trans_reduced_obs}). Using the identities $(\tilde B^T\otimes I)= (\hat B^T\otimes I)(S^{-T}\otimes 
I)^{-1}$ and $(\expn^{D \bar T}\tilde C^T \otimes \expn^{D \bar T}\tilde C^T -\tilde C^T \otimes \tilde C^T)=(S^{T} \otimes I)^{-1} (\expn^{\hat A^T 
\bar T}\hat C^T \otimes \expn^{D \bar T}\tilde C^T -\hat C^T \otimes \tilde C^T)$ yields
\begin{align*}
  \vect(\tilde Q_{\bar T}\tilde B)=(\hat B^T\otimes I) \left[(I\otimes D)+(\hat A^T\otimes 
I)\right]^{-1}(\expn^{\hat A^T \bar T}\hat C^T \otimes \expn^{D \bar T}\tilde C^T-\hat C^T \otimes \tilde C^T)\vect(I).
 \end{align*}
Vectorizing (\ref{trans_cross_obs}) leads to \begin{align*}
  \vect(\tilde Q_{2, \bar T}\tilde B)=(B^T\otimes I) \left[(I\otimes D)+(A^T\otimes I)\right]^{-1}(\expn^{A^T \bar T} C^T \otimes \expn^{D 
\bar T}\tilde C^T-C^T \otimes \tilde C^T)\vect(I),
                    \end{align*}
 which gives us (\ref{kron_opt2}). Condition (\ref{opt3}) is equivalent to \begin{align*}
\trace([\tilde P_{2, {\bar T}}-\bar T \expn^{A \bar T}B \tilde B^T \expn^{D \bar T}]e_i e_i^T\tilde Q_{2, \infty})=
\trace([\tilde P_{\bar T}-\bar T\expn^{D \bar T}\tilde B \tilde B^T \expn^{D \bar T}]e_i e_i^T\tilde Q_{\infty})
\end{align*}
for every $i\in \{1,\ldots,r\}$. Taking (\ref{vec_trace_rel}) into account, we can express the trace using the $\vect$ operator as follows: \begin{align}\label{eq:insertvecQ}
\trace([\tilde P_{\bar T}-\bar T\expn^{D \bar T}\tilde B \tilde B^T \expn^{D \bar T}]e_i e_i^T\tilde Q_{\infty})=
\vect^T(\tilde P_{\bar T}-\bar T\expn^{D \bar T}\tilde B \tilde B^T \expn^{D \bar T})(I\otimes e_i e_i^T)\vect(\tilde Q_{\infty}).
\end{align}
With the above arguments, we see that the vectorization of (\ref{infobsgram}) yields
 \begin{align}\label{eq:forvecQ}
 \vect(\tilde Q_{\infty})=-(S^{-T}\otimes I)  \left[(I\otimes D)+(\hat A^T\otimes I)\right]^{-1}(\hat C^T \otimes \tilde C^T)\vect(I).
                                      \end{align}
Before we proceed further, we need the following two relations:
\begin{align}\label{insertrel1}
&(S^{-1}\otimes I) \vect(\bar T\expn^{D \bar T}\tilde B \tilde B^T \expn^{D \bar T})= (\bar T\expn^{\hat A \bar T}\hat B \otimes \expn^{D \bar 
T}\tilde B) \vect(I),\\ \label{insertrel2}
& (S^{-1}\otimes I) \vect(\tilde P_{\bar T}) = \left[(I\otimes D)+(\hat A\otimes I)\right]^{-1}(\expn^{\hat A \bar T}\hat B \otimes \expn^{D \bar 
T}\tilde B -\hat B \otimes \tilde B) \vect(I).
                 \end{align}
We insert (\ref{eq:forvecQ}) into (\ref{eq:insertvecQ}) and obtain \begin{align*}
&\trace([\tilde P_{\bar T}-\bar T\expn^{D \bar T}\tilde B \tilde B^T \expn^{D \bar T}]e_i e_i^T\tilde Q_{\infty})\\&=
\vect^T(\tilde P_{\bar T}-\bar T\expn^{D \bar T}\tilde B \tilde B^T \expn^{D \bar T})(S^{-T}\otimes I)(I\otimes e_i e_i^T) 
\left[-(I\otimes D)-(\hat A^T\otimes I)\right]^{-1}\\&\quad\times(\hat C^T \otimes \tilde C^T)\vect(I).
\end{align*}
We apply (\ref{insertrel1}) and (\ref{insertrel2}) to the above identity. This leads to the following: \begin{align*}
&\trace([\tilde P_{\bar T}-\bar T\expn^{D \bar T}\tilde B \tilde B^T \expn^{D \bar T}]e_i e_i^T\tilde Q_{\infty})\\&=
\vect^T(I)\left[(\hat B^T\expn^{\hat A^T \bar T} \otimes \tilde B^T\expn^{D \bar 
T} -\hat B^T \otimes \tilde B^T) \left[(I\otimes D)+(\hat A^T\otimes I)\right]^{-1}-(\bar T \hat B^T \expn^{\hat A^T \bar T} \otimes \tilde 
B^T\expn^{D \bar 
T})\right] \\& \quad \times (I\otimes e_i e_i^T) \left[-(I\otimes 
D)-(\hat A^T\otimes I)\right]^{-1}(\hat C^T \otimes \tilde C^T)\vect(I)
\end{align*}
Using (\ref{vec_trace_rel}) and evaluating the expression \begin{align*}
\trace([\tilde P_{2, {\bar T}}-\bar T \expn^{A \bar T}B \tilde B^T \expn^{D \bar T}]e_i e_i^T\tilde Q_{2, 
\infty})=\vect^T(\tilde P^T_{2, {\bar T}}-\bar T \expn^{D \bar T}\tilde B B^T \expn^{A^T \bar T}) (I\otimes e_i e_i^T)\vect(\tilde Q_{2, \infty})
        \end{align*}
further by inserting the vectorized form of the matrices yields (\ref{kron_opt3}).
 \end{proof}
\end{thm}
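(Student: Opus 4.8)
The plan is to organize Theorem~\ref{thm:opt_cond} into two layers. The first layer---that local optimality forces the matrix conditions \eqref{opt1}, \eqref{opt2} and \eqref{opt3}---is already carried out in the discussion preceding the theorem: one writes the transformed error \eqref{trans_min} as a function of the free parameters $\tilde c_{ki}$, $\tilde b_{ij}$, $\lambda_i$, differentiates with the product rule (using Lemma~\ref{lem:eq:derivatives} for the $\lambda_i$-derivatives), and sets the gradient to zero at a local minimum, so those three conditions are necessary. The remaining content of the theorem is therefore the equivalence of \eqref{opt1}--\eqref{opt3} with the Kronecker identities \eqref{kron_opt1}--\eqref{kron_opt3}, and this is what I would prove here. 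The single tool is vectorization: apply $\vect$, replace each time-limited Gramian by the solution of its Sylvester equation \eqref{trans_full_reach}, \eqref{trans_reduced_reach}, \eqref{trans_cross_obs}, \eqref{trans_reduced_obs} or \eqref{infobsgram} rewritten via \eqref{vec_kron_rel} as a linear system with an invertible Kronecker-sum coefficient matrix (invertibility is exactly the spectral hypothesis behind Lemma~\ref{lem:time_lim_sylvester}), and finally eliminate the diagonalizing similarity $S$.

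For \eqref{kron_opt1} I would start from \eqref{opt1}, apply $\vect$, and treat the two sides in parallel. On the left, $\vect(\tilde C\tilde P_{\bar T})=(I\otimes\tilde C)\vect(\tilde P_{\bar T})$, and the vectorization of \eqref{trans_reduced_reach} together with \eqref{vec_kron_rel} gives $\vect(\tilde P_{\bar T})=[(I\otimes D)+(D\otimes I)]^{-1}\bigl(\expn^{D\bar T}\tilde B\otimes\expn^{D\bar T}\tilde B-\tilde B\otimes\tilde B\bigr)\vect(I)$. Substituting $\tilde C=\hat C S^{-1}$, $\tilde B=S\hat B$ and $\expn^{\hat A\bar T}=S^{-1}\expn^{D\bar T}S$, the factors of $S$ regroup: from $DS=S\hat A$ one has $[(I\otimes D)+(D\otimes I)](I\otimes S)=(I\otimes S)[(I\otimes\hat A)+(D\otimes I)]$, which moves $(I\otimes S^{-1})$ past the inverse, after which $(I\otimes S^{-1})$ converts the Kronecker factors into $\expn^{\hat A\bar T}\hat B$ and $\hat B$; this is precisely the left-hand side of \eqref{kron_opt1}. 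On the right, the same computation with \eqref{trans_full_reach} and the factor $(I\otimes C)$ produces the right-hand side. The proof of \eqref{kron_opt2} is the transposed mirror image: write \eqref{opt2} as $\vect(\tilde Q_{\bar T}\tilde B)=\vect(\tilde Q_{2,\bar T}B)$, insert the vectorizations of \eqref{trans_reduced_obs} and \eqref{trans_cross_obs}, and remove $S$ via $\tilde B^T\otimes I=(\hat B^T\otimes I)(S^{-T}\otimes I)^{-1}$ and the corresponding output-factor identities; the only care needed is keeping the transposes consistent.

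The substantive case is \eqref{kron_opt3}. First rewrite \eqref{opt3} in trace form, $\trace\bigl([\tilde P_{2,\bar T}-\bar T\expn^{A\bar T}B\tilde B^T\expn^{D\bar T}]e_ie_i^T\tilde Q_{2,\infty}\bigr)=\trace\bigl([\tilde P_{\bar T}-\bar T\expn^{D\bar T}\tilde B\tilde B^T\expn^{D\bar T}]e_ie_i^T\tilde Q_{\infty}\bigr)$, and convert both traces with \eqref{vec_trace_rel} so that $\vect(\tilde Q_{\infty})$ and $\vect(\tilde Q_{2,\infty})$ stand to the right of an $(I\otimes e_ie_i^T)$ factor. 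Then substitute the vectorized form of \eqref{infobsgram}, $\vect(\tilde Q_{\infty})=-(S^{-T}\otimes I)[(I\otimes D)+(\hat A^T\otimes I)]^{-1}(\hat C^T\otimes\tilde C^T)\vect(I)$, and its cross analogue; for the bracketed $\tilde P$-terms I would record two auxiliary identities. The first, $(S^{-1}\otimes I)\vect(\bar T\expn^{D\bar T}\tilde B\tilde B^T\expn^{D\bar T})=(\bar T\expn^{\hat A\bar T}\hat B\otimes\expn^{D\bar T}\tilde B)\vect(I)$, is a one-line consequence of \eqref{vec_kron_rel} and $S^{-1}\expn^{D\bar T}\tilde B=\expn^{\hat A\bar T}\hat B$; the second, $(S^{-1}\otimes I)\vect(\tilde P_{\bar T})=[(I\otimes D)+(\hat A\otimes I)]^{-1}(\expn^{\hat A\bar T}\hat B\otimes\expn^{D\bar T}\tilde B-\hat B\otimes\tilde B)\vect(I)$, follows exactly as in the \eqref{kron_opt1} computation after pulling $(S^{-1}\otimes I)$ through $[(I\otimes D)+(D\otimes I)]^{-1}$, now using $[(I\otimes D)+(D\otimes I)](S\otimes I)=(S\otimes I)[(I\otimes D)+(\hat A\otimes I)]$. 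Collecting all substitutions on each side and grouping the Kronecker factors yields \eqref{kron_opt3} for every $i$.

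I expect all of the second layer to be routine except \eqref{kron_opt3}, where the obstacle is purely organizational: many Kronecker factors, a transpose on the cross-Gramian branch, and the similarity $S$ threaded through several Kronecker-sum inverses before it cancels. I would control this by first isolating the handful of regrouping identities implied by $DS=S\hat A$ and its transpose (e.g. $[(I\otimes D)+(D\otimes I)](I\otimes S)=(I\otimes S)[(I\otimes\hat A)+(D\otimes I)]$ together with its left-factor and transposed analogues) as a short preliminary lemma, so they are not re-derived; and then always keeping each branch ($\tilde P_{\bar T}$, $\tilde Q_{\infty}$, and the extra $\bar T\expn^{D\bar T}\tilde B\tilde B^T\expn^{D\bar T}$ term, and likewise on the cross-Gramian side) in the canonical shape ``output-factor $\otimes$ output-factor, times a Kronecker-sum inverse, times input-factor $\otimes$ input-factor, times $\vect(I)$'', checking the $S$-cancellation inside each branch before combining them.
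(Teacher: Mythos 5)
Your proposal is correct and follows essentially the same route as the paper's proof: treat \eqref{opt1}--\eqref{opt3} as already established by the preceding gradient computation, then vectorize, replace each Gramian by the Kronecker-sum inverse applied to the vectorized Sylvester right-hand side, and eliminate $S$ via the similarity $DS=S\hat A$ (your regrouping identity is exactly the paper's substitution $(I\otimes\tilde C)=(I\otimes\hat C)(I\otimes S)^{-1}$ passed through the inverse), including the same two auxiliary identities \eqref{insertrel1}--\eqref{insertrel2} for the third condition.
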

\begin{remark}
The Wilson conditions (\ref{opt1}), (\ref{opt2}) and (\ref{opt3}) that are based on the finite time Gramians have been discussed in a 
talk at the SIAM Conference on Computational Science and Engineering \cite{SinaniGugercin}. Their results are indendent of this paper. 
\end{remark}
Inspired by the first-order optimality conditions as presented in Theorem \ref{thm:opt_cond} and IRKA for linear systems in \cite{morGugAB08}, we 
propose an iterative algorithm, see Algorithm \ref{algo:TL-IRKA}, which we refer to as \emph{time-limited IRKA-type algorithm}. The scheme is 
characterized by an additional term in the right side of the Sylvester equations in comparison to the classical IRKA. These Sylvester 
equations provide the projection matrices $V$ and $W$ that are used to determine the reduced system (\ref{sys:reduced}). However, we would like to 
point out that the proposed algorithm in general does not construct reduced-order systems which satisfy the first-order necessary conditions for 
optimality. Thus, our next goal is to derive expressions, which allow us to estimate how far away the obtained reduced-order systems, corresponding to 
Algorithm \ref{algo:TL-IRKA}, are from satisfying the optimality conditions exactly. 
\begin{algorithm}[!htb]
	\caption{ Time-limited IRKA-type Algorithm}
	\label{algo:TL-IRKA}
	\begin{algorithmic}[1]
		\Statex {\bf Input:} The system matrices: $ A, B,C$.
		\Statex {\bf Output:} The reduced matrices: $\hat A, \hat B,\hat C$.
		\State Make an initial guess for the reduced matrices $\hat A, \hat B,\hat C$.
\While {not converged}
		\State Perform the spectral decomposition of $\hA$ and define:
		\Statex\quad\qquad $D = S\hA S^{-1},~\tB = S\hB, ~\tC = \hC S^{-1}. $
		\State Solve for $V$ and $W$:
		
		\Statex \quad\qquad$ -V D  -  AV = B\tB^T - e^{A \bar T}B\tB^Te^{D \bar T}$,
		\Statex \quad\qquad$ -W D  -  A^T W = C^T\tC - e^{A^T\bar T}C^T\tC e^{D \bar T}$.
\State $V = \orth{(V)}$ and $W = \orth{(W)}$.
		\State Determine the reduced matrices:
		\Statex \quad\qquad $\hA = (W^T V)^{-1}W^TAV,\qquad  \hB = (W^T V)^{-1}W^TB,\qquad\hC = CV $.
		\EndWhile
	\end{algorithmic}
\end{algorithm}
\begin{thm}\label{thm:error_opt_cond}
Let $\hat A$, $\hat B$ and $\hat C$ be the reduced order matrices computed by Algorithm \ref{algo:TL-IRKA}. Then, the difference between the left and 
the right side in (\ref{kron_opt1}) is \begin{align*}
             E_c =   (I\otimes \hat C)  \left[(I\otimes \hat A)+(D\otimes I)\right]^{-1}(\expn^{D \bar T}\tilde B \otimes (W^TV)^{-1}W^T(\expn^{A 
\pro\bar T}-\expn^{A \bar T}) B ) \vect(I)                       
                                       \end{align*}
and equation (\ref{kron_opt2}) is satisfied up to the error term \begin{align*}
             E_b =  (\hat B^T\otimes I) \left[(I\otimes D)+(\hat A^T\otimes I)\right]^{-1}(V^T(\expn^{ A^T \pro^T \bar T}-\expn^{ A^T \bar T})
C^T \otimes \expn^{D \bar T}\tilde C^T)\vect(I),                        
                                       \end{align*}
where $\pro:=V(W^TV)^{-1}W^T$. For all $i=1, \ldots, r$ the deviation in (\ref{kron_opt3}) is $E_{\lambda}^i=E_{\lambda, 1}^i+E_{\lambda, 2}^i$, 
where \begin{align*}
             E_{\lambda, 1}^i=     &\vect^T(I) (\hat C \otimes \tilde C) \left[(I\otimes D)+(\hat 
A\otimes I)\right]^{-1} (I\otimes e_i e_i^T) \\ \nonumber &\times \left( \left[(I\otimes 
D)+(\hat A\otimes I)\right]^{-1}  ((W^TV)^{-1} W^T(\expn^{A \pro \bar T}-\expn^{A \bar T}) B \otimes \expn^{D \bar T}\tilde 
B)\right.\\&\quad\quad \left.-(\bar T(W^TV)^{-1} W^T(\expn^{A \pro\bar T}-\expn^{A \bar 
T}) B \otimes \expn^{D \bar T}\tilde B)\right) \vect(I)                      
              \end{align*}
and the second term is given by \begin{align*}
 E_{\lambda, 2}^i&=\vect^T(I) (C \expn^{A \bar T} \otimes \tilde C \expn^{D \bar T})\\&\quad \times
 \left[(V\otimes I) \left[(I\otimes D)+(\hat A\otimes I)\right]^{-1} ((W^T V)^{-1}W^T \otimes I)- \left[(I\otimes D)+(A\otimes 
I)\right]^{-1}\right]  \\&\quad \times (I\otimes e_i e_i^T) 
 \left[\left[(I\otimes D)+(A\otimes I)\right]^{-1} (\expn^{A \bar T} B \otimes \expn^{D \bar T}\tilde B - B \otimes \tilde B)-(\bar T\expn^{A 
\bar T} B \otimes \expn^{D \bar T}\tilde B)\right]\\&\quad \times\vect(I).                            
                               \end{align*}
\begin{proof}
The left side of (\ref{kron_opt1}) can be expressed as \begin{align*}
  (I\otimes \hat C)  \left[(I\otimes \hat A)+(D\otimes I)\right]^{-1}(\expn^{D \bar T}\tilde B \otimes (W^TV)^{-1}W^T\expn^{A \bar T} B 
-\tilde B \otimes \hat B) \vect(I)+ E_c,
                                                       \end{align*}
where we apply that $\expn^{\hat A \bar T}\hat B=(W^TV)^{-1}W^T \expn^{A \pro \bar T} B$. We set $\hat K:= (I\otimes \hat A)+(D\otimes I)$ and $K:= 
(I\otimes A)+(D\otimes I)$ and obtain \begin{align*}
  &(I\otimes \hat C)  \hat K^{-1}(\expn^{D \bar T}\tilde B \otimes (W^TV)^{-1}W^T\expn^{A \bar T} B 
-\tilde B \otimes \hat B) \vect(I)\\&= (I\otimes \hat C)  \hat K^{-1} (I\otimes (W^TV)^{-1}W^T) (\expn^{D \bar T}\tilde B \otimes \expn^{A \bar T} B 
-\tilde B \otimes B) \vect(I)\\&= (I\otimes \hat C)  \hat K^{-1} (I\otimes (W^TV)^{-1}W^T) K \vect(V)\\&=(I\otimes \hat C)  \hat K^{-1} (I\otimes 
(W^TV)^{-1}W^T) K \vect(V(W^TV)^{-1}W^T V)\\&=(I\otimes \hat C)  \hat K^{-1} (I\otimes 
(W^TV)^{-1}W^T) K (I\otimes V(W^TV)^{-1}W^T) \vect(V)\\&=(I\otimes \hat C)  \hat K^{-1}  \hat K (I\otimes (W^TV)^{-1}W^T) \vect(V)\\&=(I\otimes C) 
(I\otimes V) (I\otimes (W^TV)^{-1}W^T) \vect(V)=(I\otimes C)\vect(V)\\&= (I\otimes C) K^{-1} (\expn^{D \bar T}\tilde B 
\otimes \expn^{A \bar T} B -\tilde B \otimes B) \vect(I),
                                                       \end{align*}
where the last term above is the right side of (\ref{kron_opt1}). The left side of (\ref{kron_opt2}) is given by \begin{align*}
(\hat B^T\otimes I) \left[(I\otimes D)+(\hat A^T\otimes I)\right]^{-1}(V^T\expn^{A^T \bar T} C^T \otimes \expn^{D \bar T}\tilde C^T-\hat C^T 
\otimes \tilde C^T)\vect(I)+E_b, \end{align*}
taking the identity $\expn^{\hat A^T \bar T}\hat C^T=V^T\expn^{ A^T \pro^T \bar T} C^T$ into account. So, by setting $\hat K_2:= (I\otimes D)+(\hat 
A\otimes I)$ and $K_2:= (I\otimes D)+(A\otimes I)$, we have \begin{equation}\label{proof:Eb}\begin{aligned}
&(\hat B^T\otimes I) \hat K_2^{-T}(V^T\expn^{A^T \bar T} C^T \otimes \expn^{D \bar T}\tilde C^T-\hat C^T 
\otimes \tilde C^T)\vect(I)\\&=(\hat B^T\otimes I) \hat K_2^{-T} (V^T\otimes I) (\expn^{A^T \bar T} C^T \otimes \expn^{D \bar T}\tilde C^T-C^T 
\otimes \tilde C^T)\vect(I)\\&=(\hat B^T\otimes I) \hat K_2^{-T} (V^T\otimes I) K_2^T\vect(W^T)\\&=(\hat B^T\otimes I) \hat K_2^{-T} (V^T\otimes I) 
K_2^T\vect(W^TV(W^TV)^{-1}W^T)\\&= (\hat B^T\otimes I) \hat K_2^{-T} (V^T\otimes I) K_2^T(W(W^TV)^{-T}V^T\otimes I)\vect(W^T)\\&= (\hat B^T\otimes I) 
\hat K_2^{-T} \hat K_2^T(V^T\otimes I)\vect(W^T)\\&= (B^T\otimes I) (W(W^TV)^{-T}\otimes I) (V^T\otimes I)\vect(W^T)=(B^T\otimes I) 
\vect(W^T)\\&=(B^T\otimes I)K_2^{-T} (\expn^{A^T \bar T} C^T \otimes \expn^{D \bar T}\tilde C^T-C^T \otimes \tilde C^T)\vect(I) 
\end{aligned}\end{equation}           
 which is the right side of (\ref{kron_opt2}).  The left side of (\ref{kron_opt3}) is given by
 \begin{align*} 
   E_{\lambda, 1}^i+&\vect^T(I) (\hat C \otimes \tilde C) \hat K_2^{-1} (I\otimes e_i e_i^T) \left(\hat K_2^{-1}  
((W^TV)^{-1}W^T\expn^{A \bar T} B \otimes \expn^{D \bar T}\tilde B -\hat B \otimes \tilde 
B)\right.\\&\quad\quad\quad\quad\quad\quad\quad\quad\quad\quad\quad\quad\quad\quad\left.-(\bar T(W^TV)^{-1}W^T \expn^{A \bar T}\hat B \otimes 
\expn^{D \bar T}\tilde B)\right)\vect(I).
\end{align*}          
For the term right of $(I\otimes e_i e_i^T)$ it holds that \begin{align*} 
 &\left[\hat K_2^{-1}  ((W^TV)^{-1}W^T\expn^{A \bar T} B \otimes \expn^{D \bar T}\tilde B -\hat B \otimes \tilde 
B)\right.\\&\quad\left.-(\bar T(W^TV)^{-1}W^T \expn^{A \bar T}\hat B \otimes \expn^{D \bar T}\tilde B)\right]\vect(I)\\&= 
 \hat K_2^{-1} ((W^TV)^{-1}W^T\otimes I)  (\expn^{A \bar T} B \otimes \expn^{D \bar T}\tilde B - B \otimes \tilde 
B)\vect(I)\\&\quad-(\bar T (W^TV)^{-1}W^T \expn^{A \bar T} B \otimes \expn^{D \bar T}\tilde B)\vect(I) \\&= 
 \hat K_2^{-1} ((W^TV)^{-1}W^T\otimes I) K_2 \vect(V^T)-(\bar T (W^TV)^{-1}W^T \expn^{A \bar T} B \otimes \expn^{D \bar T}\tilde B)\vect(I)\\&= 
 \hat K_2^{-1} ((W^TV)^{-1}W^T\otimes I) K_2 \vect(V^TW(W^T V)^{-T}V^T)\\&\quad-(\bar T (W^TV)^{-1}W^T\expn^{A \bar T} B \otimes \expn^{D \bar 
T}\tilde B)\vect(I)\\&= 
 \hat K_2^{-1} ((W^TV)^{-1}W^T\otimes I) K_2 (V(W^T V)^{-1} W^T\otimes I) \vect(V^T)\\&\quad-(\bar T (W^TV)^{-1}W^T \expn^{A \bar T} B \otimes 
\expn^{D \bar T}\tilde B)\vect(I)\\&= ((W^T V)^{-1} W^T\otimes I) \vect(V^T)-(\bar T (W^TV)^{-1}W^T \expn^{A \bar T} B \otimes \expn^{D \bar T}\tilde 
B)\vect(I)\\&= (W^T V)^{-1} W^T\otimes I)\left[K_2^{-1} (\expn^{A \bar T} B \otimes \expn^{D \bar T}\tilde B - B \otimes \tilde B)-(\bar T\expn^{A 
\bar T} B \otimes \expn^{D \bar T}\tilde B)\right]\vect(I).
\end{align*}  
Since $((W^T V)^{-1} W^T\otimes I)$ and $(I\otimes e_i e_i^T)$ commute, it remains to analyze the following term \begin{align*} 
 \vect^T(I) (\hat C \otimes \tilde C) \hat K_2^{-1} ((W^T V)^{-1} W^T\otimes I)=\left[(W (W^T V)^{-T} \otimes I) \hat K_2^{-T} (\hat C^T \otimes 
\tilde C^T)\vect(I)\right]^T.
\end{align*}    
We add a zero such that \begin{align*}
  &(W (W^T V)^{-T} \otimes I) \hat K_2^{-T} (\hat C^T \otimes \tilde C^T)\vect(I)\\&=   (W (W^T V)^{-T} \otimes I) \hat K_2^{-T}(V^T\otimes I)) [(C^T 
\otimes \tilde C^T)-(\expn^{A^T \bar T} C^T \otimes \expn^{D \bar T}\tilde C^T)] \vect(I) \\ &\quad 
+(W (W^T V)^{-T} \otimes I) \hat K_2^{-T}(V^T\otimes I))  (\expn^{A^T \bar T} C^T \otimes \expn^{D \bar T}\tilde C^T) \vect(I).
                        \end{align*}
Using the same steps as in (\ref{proof:Eb}), we find \begin{align*} 
&(W (W^T V)^{-T} \otimes I) \hat K_2^{-T}(V^T\otimes I)) [(C^T 
\otimes \tilde C^T)-(\expn^{A^T \bar T} C^T \otimes \expn^{D \bar T}\tilde C^T)] \vect(I)\\&=  K_2^{-T}[(C^T \otimes \tilde C^T)-(\expn^{A^T \bar T} 
C^T \otimes \expn^{D \bar T}\tilde C^T)] \vect(I).
                                                     \end{align*}
Consequently, we have  \begin{align} \nonumber
 &\vect^T(I) (\hat C \otimes \tilde C) \hat K_2^{-1} ((W^T V)^{-1} W^T\otimes I)=\vect^T(I) (C \otimes \tilde C) K_2^{-1} \\& \label{Elambda2}
 +\vect^T(I) (C \expn^{A \bar T} \otimes \tilde C \expn^{D \bar T})\left[(V\otimes I) \hat K_2^{-1} ((W^T V)^{-1}W^T \otimes I)- K_2^{-1}\right]. 
\end{align}    
The term in (\ref{Elambda2}) provides $E_{\lambda, 2}^i$ which concludes the proof.
\end{proof}
\end{thm}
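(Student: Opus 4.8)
The plan is to verify \gref{kron_opt1}--\gref{kron_opt3} up to the stated error terms one condition at a time, in each case isolating the error by an add-and-subtract step and then showing that the remaining ``principal part'' equals the right-hand side by a telescoping computation driven by the two Sylvester equations that define $V$ and $W$ in Algorithm \ref{algo:TL-IRKA}. The only place an error enters is that the reduced matrix exponentials are not exactly the Galerkin-projected full-order exponentials. Writing $\pro := V(W^TV)^{-1}W^T$ for the oblique projector associated with the algorithm and using $\hat A = (W^TV)^{-1}W^TAV$, $\hat B = (W^TV)^{-1}W^TB$, $\hat C = CV$, a one-line induction on $k$ gives $\hat A^k\hat B = (W^TV)^{-1}W^T(A\pro)^kB$ and $\hat C\hat A^k = C(\pro A)^kV$; summing the exponential series yields the identities $\expn^{\hat A\bar T}\hat B = (W^TV)^{-1}W^T\expn^{A\pro\bar T}B$ and $\expn^{\hat A^T\bar T}\hat C^T = V^T\expn^{A^T\pro^T\bar T}C^T$. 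Inserting these in place of $\expn^{\hat A\bar T}\hat B$ and $\expn^{\hat A^T\bar T}\hat C^T$ and then adding and subtracting the unprojected versions $(W^TV)^{-1}W^T\expn^{A\bar T}B$ and $V^T\expn^{A^T\bar T}C^T$ is what produces $E_c$, $E_b$, $E_{\lambda,1}^i$ and $E_{\lambda,2}^i$ as the pieces carrying the factor $\expn^{A\pro\bar T}-\expn^{A\bar T}$ (or its transpose). Throughout I use that $W^TV$ is invertible, so $\pro V=V$ and $W^T\pro=W^T$.

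For \gref{kron_opt1}, after the substitution and the add-and-subtract, the $\pro$-discrepancy piece is precisely $E_c$, and the principal part is $(I\otimes\hat C)\hat K^{-1}(\expn^{D\bar T}\tilde B\otimes(W^TV)^{-1}W^T\expn^{A\bar T}B - \tilde B\otimes\hat B)\vect(I)$ with $\hat K := (I\otimes\hat A)+(D\otimes I)$. Pulling the factor $I\otimes(W^TV)^{-1}W^T$ out of the Kronecker tensor (using $\hat B=(W^TV)^{-1}W^TB$), and recognising from the vectorised first Sylvester equation of Algorithm \ref{algo:TL-IRKA} that $(\expn^{D\bar T}\tilde B\otimes\expn^{A\bar T}B - \tilde B\otimes B)\vect(I) = K\vect(V)$ with $K := (I\otimes A)+(D\otimes I)$, the principal part becomes $(I\otimes\hat C)\hat K^{-1}(I\otimes(W^TV)^{-1}W^T)K(I\otimes\pro)\vect(V)$. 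The key step is the conjugation identity $(I\otimes(W^TV)^{-1}W^T)\,K\,(I\otimes\pro) = \hat K\,(I\otimes(W^TV)^{-1}W^T)$, which follows by multiplying out the definitions of $\hat A$ and $\pro$; then $\hat K^{-1}\hat K$ cancels and what is left collapses to $(I\otimes C)\vect(V)$, i.e.\ the right-hand side of \gref{kron_opt1} once $\vect(V)=K^{-1}(\expn^{D\bar T}\tilde B\otimes\expn^{A\bar T}B - \tilde B\otimes B)\vect(I)$ is substituted back. Condition \gref{kron_opt2} is handled identically with $W\leftrightarrow V$, $(C,\tilde C)\leftrightarrow(B,\tilde B)$ and the left/right Kronecker factors transposed: substitute $\expn^{\hat A^T\bar T}\hat C^T=V^T\expn^{A^T\pro^T\bar T}C^T$, peel off $E_b$, and telescope using the vectorised second Sylvester equation $K_2^T\vect(W^T) = (\expn^{A^T\bar T}C^T\otimes\expn^{D\bar T}\tilde C^T - C^T\otimes\tilde C^T)\vect(I)$ together with $W^T\pro=W^T$, where $K_2 := (I\otimes D)+(A\otimes I)$ and $\hat K_2 := (I\otimes D)+(\hat A\otimes I)$.

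The condition \gref{kron_opt3} is the same idea applied twice. First, in the bracket to the right of $(I\otimes e_ie_i^T)$: substituting the exponential identity for $\expn^{\hat A\bar T}\hat B$ into both occurrences and adding and subtracting $(W^TV)^{-1}W^T\expn^{A\bar T}B$ peels off $E_{\lambda,1}^i$ (it collects the $\pro$-discrepancy from the $\hat K_2^{-1}$-weighted occurrence and from the $\bar T$-weighted occurrence), and the same telescoping as for \gref{kron_opt1} --- now applied to $\vect(V^T)$ via the transposed first Sylvester equation --- rewrites the bracket as $((W^TV)^{-1}W^T\otimes I)\bigl[K_2^{-1}(\expn^{A\bar T}B\otimes\expn^{D\bar T}\tilde B - B\otimes\tilde B) - (\bar T\expn^{A\bar T}B\otimes\expn^{D\bar T}\tilde B)\bigr]\vect(I)$. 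Since $((W^TV)^{-1}W^T\otimes I)$ commutes with $(I\otimes e_ie_i^T)$, it may be moved to the left, and the remaining work is to analyse the outer factor $\vect^T(I)(\hat C\otimes\tilde C)\hat K_2^{-1}((W^TV)^{-1}W^T\otimes I)$. Transposing it to $[(\pro^T\otimes I)\hat K_2^{-T}(\hat C^T\otimes\tilde C^T)\vect(I)]^T$ and adding a zero --- adding and subtracting $(\expn^{A^T\bar T}C^T\otimes\expn^{D\bar T}\tilde C^T)\vect(I)$ --- splits it into one piece that telescopes down to $\vect^T(I)(C\otimes\tilde C)K_2^{-1}$ exactly as in the \gref{kron_opt2} computation, and a leftover piece $\vect^T(I)(C\expn^{A\bar T}\otimes\tilde C\expn^{D\bar T})\bigl[(V\otimes I)\hat K_2^{-1}((W^TV)^{-1}W^T\otimes I) - K_2^{-1}\bigr]$. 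Recombining: the telescoped part together with the inner bracket reproduces the right-hand side of \gref{kron_opt3}, and the leftover part together with the inner bracket is exactly $E_{\lambda,2}^i$.

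I expect the main obstacle to be bookkeeping rather than any conceptual hurdle: tracking which Kronecker factor multiplies from which side, keeping the transposes introduced by $\vect^T$ consistent, and --- above all --- choosing the ``zero'' added in the \gref{kron_opt3} step so that the telescoping identities already established for \gref{kron_opt1}--\gref{kron_opt2} can be reused verbatim. Two structural caveats should be noted in passing: the exponential identities for the reduced system rely on $W^TV$ being invertible (implicit in Algorithm \ref{algo:TL-IRKA}), and the infinite Gramians $\tilde Q_\infty$, $\tilde Q_{2,\infty}$ entering the derivation of \gref{kron_opt3} require the spectral non-resonance hypotheses already used in Theorem \ref{thm:opt_cond}.
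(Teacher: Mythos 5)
Your proposal is correct and follows essentially the same route as the paper: substitute $\expn^{\hat A\bar T}\hat B=(W^TV)^{-1}W^T\expn^{A\pro\bar T}B$ and $\expn^{\hat A^T\bar T}\hat C^T=V^T\expn^{A^T\pro^T\bar T}C^T$, peel off the error terms by add-and-subtract, and collapse the principal parts onto the right-hand sides via the vectorized Sylvester equations for $V$, $W$ and the projector conjugation identities relating $K,\hat K$ and $K_2,\hat K_2$ (the paper phrases the latter by inserting $\vect(V)=\vect(\pro V)$, resp.\ $\vect(W^T)=\vect(W^T\pro)$, and regrouping, which is the same computation). Your power-series induction for the projected exponential identities is a small addition the paper merely asserts.
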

Theorem \ref{thm:error_opt_cond} allows us to point out the cases in which Algorithm \ref{algo:TL-IRKA} works well. The method is expected to perform 
well whenever the error expressions $E_b, E_c$ and $E^i_\lambda$ are small. By Theorem \ref{thm:error_opt_cond}, the error in the 
optimality condition (\ref{kron_opt1}) is bounded as follows:\begin{align*}
   \left\|E_c\right\|_2 \leq   \sqrt{m}k_c\left\|\expn^{D \bar T}\tilde B \right\|_2 \left\|(W^TV)^{-1}W^T(\expn^{A 
\pro\bar T}-\expn^{A \bar T}) B\right\|_2,                       
                                       \end{align*}
where $k_c>0$ is a suitable constant. Thus, $\left\|E_c\right\|_2$ is small if $\left\|(W^TV)^{-1}W^T(\expn^{A \pro\bar T}-\expn^{A \bar T}) 
B\right\|_2$ is small. At the same time \begin{align*}
                \left\|\expn^{D \bar T}\tilde B \right\|_2\leq \expn^{\lambda_{\max} \bar T}\left\|\tilde B \right\|_2
                \end{align*}
should not be too large which is given if the largest eigenvalue $\lambda_{\max}$ of $\hat A$ is small enough or ideally negative (asymptotic 
stability of the reduced system). Similar conclusions can be made when looking at $E_b$. It is bounded by \begin{align*}
   \left\|E_b\right\|_2 \leq   \sqrt{p}k_b\left\|\tilde C \expn^{D \bar T} \right\|_2 \left\|C(\expn^{\pro A\bar T}-\expn^{A \bar T})V\right\|_2     
                                   \end{align*}  
with a sufficiently large constant $k_b>0$. Hence, if $\left\|C(\expn^{\pro A\bar T}-\expn^{A \bar T})V\right\|_2$ is small, then condition 
(\ref{kron_opt2}) is approximately satisfied. Now, $\left\vert E_{\lambda, 1}^i\right\vert$ can be bounded in a similar way as $\left\|E_c\right\|_2$ 
such that it is also small if $\left\|(W^TV)^{-1}W^T(\expn^{A \pro\bar T}-\expn^{A \bar T}) B\right\|_2$ is neglectable, whereas for $\left\vert 
E_{\lambda, 2}^i\right\vert$ it is required to have the product \begin{align*}
&\left\|C \expn^{A \bar T}\right\|_2 \left\|\tilde C \expn^{D \bar T}\right\|_2\\&\quad \times
\left\| (V\otimes I) \left[(I\otimes D)+(\hat A\otimes I)\right]^{-1} ((W^T V)^{-1}W^T \otimes I)- \left[(I\otimes D)+(A\otimes 
I)\right]^{-1}\right\|_2                          
                               \end{align*}
small. The asymptotically stable matrix $A$ is also helpful in this context.

\definecolor{mycolor1}{rgb}{1.00000,0.00000,1.00000}%

\section{Numerical Experiments}\label{sec:numericalsection}
In this section, we investigate the efficiency of the time-limited IRKA inspired algorithm, see Algorithm~\ref{algo:TL-IRKA}, and compare it 
with conventional IRKA (unbounded time), see \cite{morGugAB08}. All the experiments are done in \matlab~8.0.0.783 (R2012b) on a machine \intel\xeon 
CPU X5650 @ 2.67GHz with 48 GB RAM. We run both iterative algorithms until the relative change in the eigenvalues of $\hA$ becomes less a 
tolerance of $10^{-8}$. We initialize conventional IRKA randomly, and we use the reduced-order system obtained by conventional 
IRKA as an initial guess for Algorithm~\ref{algo:TL-IRKA}.  In Table \ref{tab:list_examples}, we list the examples used in order to compare the 
algorithms. For all examples, we compare the impulse responses of the systems, which is simulated using the \texttt{impulse} command from MATLAB. To 
quantify the quality of reduced-order systems, we determine either the absolute or the relative error, depending on weather the impulse response 
crosses zero or not. We define the absolute $\cE^{(a)}(t)$ and relative errors $\cE^{(r)}(t)$, respectively, as follows: 
\begin{equation}
\cE^{(a)}(t) := \|y^{(\delta)}(t) - y^{(\delta)}_r(t)\| \quad \text{and} \quad \cE^{(r)}(t) := \dfrac{\|y^{(\delta)}(t) - y^{(\delta)}_r(t)\|}{\|y(t)\|},
\end{equation}
where $y^{(\delta)}$ and $y^{(\delta)}_r$ are the impulses responses of original and reduced-order systems.  In addition to this, we numerically 
examine how far away the reduced-order systems due to IRKA and Algorithm~\ref{algo:TL-IRKA} are from satisfying the optimality conditions 
\eqref{kron_opt1} -- \eqref{kron_opt3}. To measure this, we first define the following quantities:
\begin{subequations}\label{eq:def_error}
\begin{align}
\mathcal E_c &= \|\mathcal R^{(c)}_{l}- \mathcal R^{(c)}_{r} \| /  \|\mathcal R^{(c)}_{l} \|,  \\
\mathcal E_b &= \|\mathcal R^{(b)}_{l}- \mathcal R^{(b)}_{r} \| /  \|\mathcal R^{(b)}_{l} \|, \\
\mathcal E_\lambda &= \max_{i}{(\mathcal R_{\lambda_{i}}) },  &\mathcal R_{\lambda_{i}} &= \left\vert\mathcal R^{(\lambda_i)}_{l}- \mathcal 
R^{(\lambda_i)}_{r} \right\vert /  \left\vert\mathcal  R^{(\lambda_i)}_{l} \right\vert,
\end{align}
\end{subequations}
where $\mathcal R^{(c)}_{l}$ and $\mathcal R^{(c)}_{r}$ are the left and right sides of \eqref{kron_opt1}; $\mathcal R^{(b)}_{l}$ and $\mathcal 
R^{(b)}_{r}$ are the left and right sides of \eqref{kron_opt2}; $\mathcal R^{(\lambda_i)}_{l}$ and $\mathcal R^{(\lambda_i)}_{r}$ are the left and 
right sides of \eqref{kron_opt3};  $\max(\cdot)$ denotes the maximum. 
\begin{table}[tb!]
	\centering
	\begin{tabular}{|c|c|c|c|}
\hline
Example & n  & m & p\\
\hline
Heat equation & 200 & 1& 1\\
\hline 
Clamped beam model & 348 & 1& 1\\
\hline
Component $1$r of the International Space Station & 270 & 3&3\\
\hline
	\end{tabular}
\caption{A list of examples with their dimensions $(n)$, the number of inputs $(m)$ and outputs $(p)$. These examples are taken from 
\url{http://slicot.org/20-site/126-benchmark-examples-for-model-reduction}.}
\label{tab:list_examples}
\end{table}

In the following, we discuss each of these examples in detail. Beginning with the heat example, we compute the reduced-order systems by employing 
conventional IRKA and Algorithm~\ref{algo:TL-IRKA} of order $r = 5$. We consider the terminal time $\bar T = 1$. In Figure \ref{fig:heat_impulse}, 
we compare the impulse response which shows that Algorithm~\ref{algo:TL-IRKA} yields a reduced-order system, replicating the systems dynamics better 
in the time interval $[0,\bar T]$. Furthermore, as it has been noted in Section \ref{sec:optimality_condtions}, Algorithm~\ref{algo:TL-IRKA} does not 
yield a reduced-order system, satisfying the optimality conditions. Thus, in Table \ref{tab:heat_opt} we measure the error of the 
reduced-order systems obtained via IRKA and Algorithm~\ref{algo:TL-IRKA} in the optimality conditions as described in \eqref{eq:def_error}. The table 
shows that for the heat example, Algorithm~\ref{algo:TL-IRKA} does a better job in satisfying the two optimality conditions, and in contrast the 
third condition is satisfied better by the reduced-order system due to conventional IRKA.
\begin{figure}[!htb]
	\centering
	\includegraphics{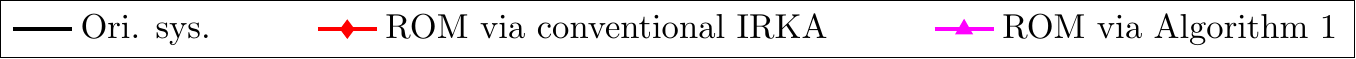}
	\centering
	\setlength\fheight{3cm}  \setlength\fwidth{5.25cm}
	\includegraphics{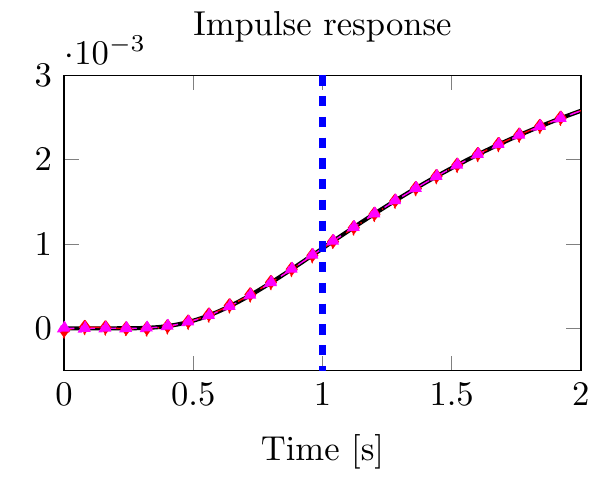}  \includegraphics{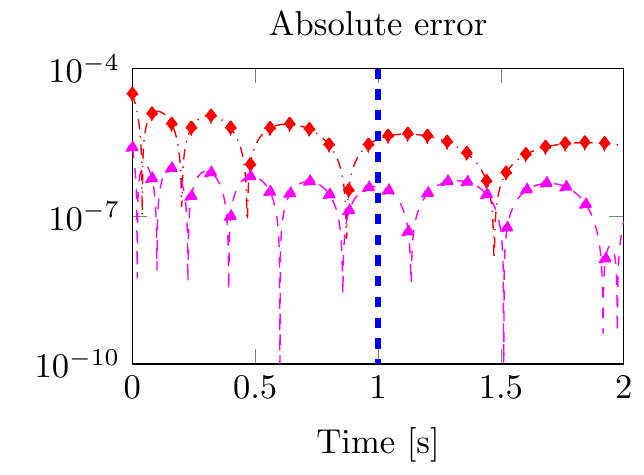}
	\caption{Heat example: a comparison of the impulse response of the original system and reduced-order system obtained via IRKA and 
Algorithm~\ref{algo:TL-IRKA}.}
	\label{fig:heat_impulse}
\end{figure}

	\begin{table}[!tb]
		\centering
		\begin{tabular}{|c|c|c|c|}
			\hline
			Method & $\cE_c$ & $\cE_b$ & $\cE_\lambda$\\ 
			\hline
IRKA & $2.7\times 10^{-3}$ & $2.7\times 10^{-3}$ & $9.10 \times 10^{-3}$\\ 
\hline
TL-IRKA &  $1.39\times 10^{-4}$ & $1.39\times 10^{-4}$ & $1.58\times 10^{-1}$\\
\hline
		\end{tabular}
		\caption{Heat example: relative errors in satisfying the optimality conditions.}
		\label{tab:heat_opt}
	\end{table}

As a second example, we have taken a beam model which is reduced to the order $r = 10$ using the IRKA and Algorithm~\ref{algo:TL-IRKA}. For this, we 
set the terminal time to $\bar T = 2$. Next, we compare the impulse responses of the original and reduced-order systems in Figure 
\ref{fig:beam_impulse}. Clearly, we observe that Algorithm~\ref{algo:TL-IRKA} produces a better reduced-order system as compared to IRKA at least 
within the time interval of interest. Furthermore, in Table \ref{tab:beam_opt}, we measure the error of the obtained reduced-order systems 
in the optimality conditions, where we make a similar observation as in the heat example. 

\begin{figure}[!htb]
	\centering
	\includegraphics{legend1.pdf}
	\centering
	\setlength\fheight{3cm}  \setlength\fwidth{5.25cm}
	\includegraphics{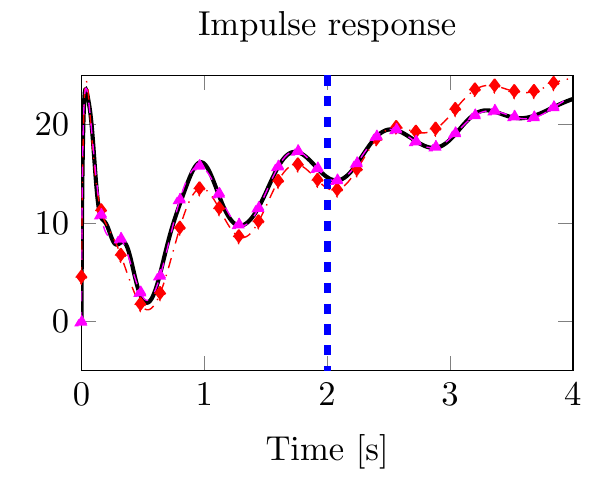}  \includegraphics{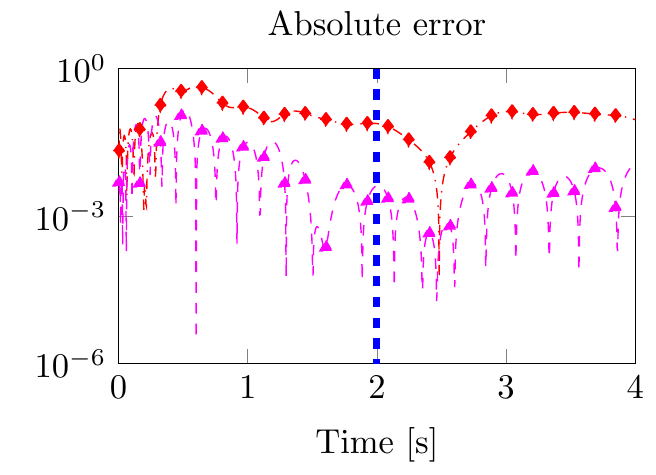}
	\caption{Beam example: a comparison of the impulse response of the original system and reduced-order system obtained via IRKA and 
Algorithm ~\ref{algo:TL-IRKA}.}
	\label{fig:beam_impulse}
\end{figure}

	\begin{table}[!bt]
		\centering
		\begin{tabular}{|c|c|c|c|}
			\hline
			Method & $\cE_c$ & $\cE_b$ & $\cE_\lambda$\\ \hline
			IRKA & $5.96\times 10^{-2}$ & $5.96\times 10^{-2}$ & $9.47 \times 10^{-2}$\\ 
			\hline
			TL-IRKA &  $3.94\times 10^{-4}$ & $3.94\times 10^{-4}$ &$1.26\times 10^{-1}$\\
			\hline
		\end{tabular}
			\caption{Beam example: relative error in satisfying the optimality conditions.}
			\label{tab:beam_opt}
	\end{table}

Lastly, we present the results for the model of a space station.  We first set the terminal time to $\bar T = 1$.  For this example, we construct 
reduced systems of order $r = 20$ via IRKA and Algorithm~\ref{algo:TL-IRKA} and compare the quality of them using the impulse response. Since the 
example has $3$ inputs and $3$ outputs, for brevity we refrain to plot the impulse response, but we rather plot the norm absolute error which is shown 
in Figure \ref{fig:ISS_imp_1s}. We observe that Algorithm~\ref{algo:TL-IRKA} constructs a reduced-order system which replicates the dynamics better 
within the time interval of interest.  For this example, we again compute how far away the reduced-order systems are from satisfying the optimality 
conditions exactly in Table \ref{tab:ISS_opt_1s}. For this example as well, Algorithm \ref{algo:TL-IRKA} does a better job than IRKA in satisfying 
the first two conditions, but fails to perform better for the third conditions. However, importantly, Algorithm \ref{algo:TL-IRKA} yields a better 
reduced-order system. 


\begin{figure}[!tb]
	\centering
\includegraphics{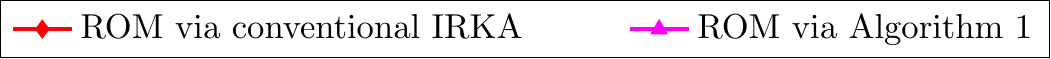}
\centering
	\setlength\fheight{3cm}  \setlength\fwidth{5.25cm}
	\includegraphics{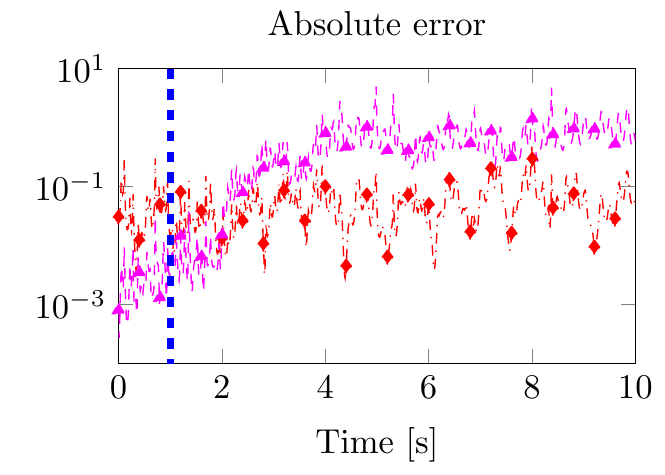}
	\caption{ISS example: a comparison of the impulse response of the original system and reduced-order system obtained via IRKA and 
Algorithm~\ref{algo:TL-IRKA}.}
	\label{fig:ISS_imp_1s}
\end{figure}

	\begin{table}[H]
	\centering
	\begin{tabular}{|c|c|c|c|}
		\hline
		Method & $\cE_c$ & $\cE_b$ & $\cE_\lambda$\\ \hline
		IRKA & $2.61\times 10^{-1}$ & $1.62\times 10^{-1}$ & $1.08 \times 10^{-1}$\\ 
		\hline
		TL-IRKA &  $6.00\times 10^{-2}$ & $5.43\times 10^{-3}$ & $ 4.46\times 10^{-1}$\\
		\hline
	\end{tabular}
	\caption{ISS example: relative error in satisfying the optimality conditions.}
\label{tab:ISS_opt_1s}
\end{table}

%

\section{Conclusions}

In this work, we have studied large scale linear time-invariant systems which we aimed to reduce. We showed that the error between the original 
and the reduced system on a finite time interval can be bounded using the so-called time-limited $\mathcal H_2$-norm. In order to find a reduced 
order model with a small output error, we minimized the $\mathcal H_2$-norm with respect to the reduced order system matrices. This resulted in 
necessary conditions for optimality using representation of the time-limited $\mathcal H_2$-norm based on the time-limited Gramians. Reduced systems 
satisfying theses condition are expected to perform well on the finite time interval of interest. Based on these optimality conditions, we propose an 
iterative scheme which is inspired by the iterative rational Krylov algorithm \cite{morGugAB08}. Moreover, the error of the proposed iterative 
algorithm in the derived optimality conditions has been analyzed to point out the cases in which the proposed method works 
particularly well. We concluded this paper by comparing conventional IRKA, an algorithm leading to a good reduced system on an infinite time 
horizon, with the proposed iterative scheme in several numerical experiments. The simulations showed that time-limited IRKA can outperform IRKA on the 
finite time interval of interest.

As we have seen, the proposed iterative-type algorithm for the time-limited problem does not satisfy the optimality conditions exactly. Therefore, it 
would be worthwhile to come up with an improved algorithm, allowing us to construct a reduced-order system which satisfy the derived optimality 
conditions exactly. 

\bibliographystyle{plain}

\end{document}